\def\ddefloop#1{\ifx\ddefloop#1\else\ddef{#1}\expandafter\ddefloop\fi}
\def\ddef#1{\expandafter\def\csname v#1\endcsname{\ensuremath{\boldsymbol{#1}}}}
\DeclareMathOperator*{\spn}{span}
\newtheorem{theorem}{Theorem}[section]
\newtheorem{lemma}[theorem]{Lemma}
\theoremstyle{definition}
\newcommand{\LL}{\mathcal{L}}
\newcommand{\M}{\mathcal{M}}
\newcommand{\me}{\mathrm{e}} %
\theoremstyle{remark}
\newtheorem{remark}[theorem]{Remark}
\numberwithin{equation}{section}
\begin{document}

\title[Computing Coercivity Constants]{On Computing Coercivity Constants in Linear Variational Problems through Eigenvalue Analysis}

\author{Peter Sentz}
\address{Department of Computer Science, University of Illinois at Urbana-Champaign}
\email{sentz2@illinois.edu}

\author{Jehanzeb Hameed Chaudhry}
\address{Department of Mathematics, University of New Mexico}
\email{jehanzeb@unm.edu}
\urladdr{https://math.unm.edu/\string~jehanzeb}

\author{Luke N. Olson}
\address{Department of Computer Science, University of Illinois at Urbana-Champaign}
\email{lukeo@illinois.edu}
\urladdr{http://lukeo.cs.illinois.edu}

\subjclass[2010]{Primary 65N30, 65N12, 65N15, 35J20}

\begin{abstract}
  In this work, we investigate the convergence of numerical approximations to coercivity constants of variational problems.  These constants are essential components of rigorous error bounds for reduced-order modeling; extension of these bounds to the error with respect to exact solutions requires an understanding of convergence rates for discrete coercivity constants.
  The results are obtained by characterizing the coercivity constant as a spectral value of a self-adjoint linear operator; for several differential equations, we show that the coercivity constant is related to the eigenvalue of a compact operator.  For these applications, convergence rates are derived and verified with numerical examples.
\end{abstract}

\maketitle

\section{Introduction}\label{sec:intro}

A key ingredient in error estimates for variational problems is knowledge of the so-called \textit{coercivity} or \textit{inf-sup} constants of the continuous problem.  We consider variational problems of the form: find $u \in V$ such that
\begin{equation}\label{eq:variational_equation}
a(u,v) = F(v),\quad \forall v \in V,
\end{equation}
where $V$ is a Hilbert space of functions, $a:V\times V\to\mathbb{R}$ is a continuous bilinear form, and $F\in V'$ is a continuous linear functional.  A sufficient condition for~\cref{eq:variational_equation} to be well-posed is the existence of a constant $\alpha > 0$ such that
\begin{equation}\label{eq:coercive_form}
	a(u,u)\geq \alpha \|u\|_V^2,\quad \forall u\in V
\end{equation}
i.e., the bilinear form is coercive.  The largest positive number $\alpha$ for which~\cref{eq:coercive_form} holds is the \textit{coercivity constant}.  The norm in~\Cref{eq:coercive_form} is induced by the inner product on $V$~---~throughout this paper, for an arbitrary Hilbert space $W$, we denote the (induced) norm as $\|u\|_W$ and the inner product as $(u,u)_W$.  While not a necessary condition for well-posedness, coercive forms are an important class of variational problems.  An estimate of the  error often relies on a lower bound of the coercivity constant $\alpha$~\cites{huynh2007successive,rozza2008reduced,nguyen2010reduced,buffa2012priori};
however, this value is rarely computable.  Numerical approximations of $\alpha$ converge rapidly in cases where the constant can be characterized as the eigenvalue of a compact operator~\cite{boffi2010finite}.  Establishing similar convergence rates is essential for understanding error bounds that make use of the coercivity constant, and is the focus of this work.

Error bounds of this type are relevant in reduced-order modeling (ROM), which replaces an expensive ``full-order'' model with one that is computationally tractable, yet still resolving certain features in the original problem~\cites{huynh2011high,benner2015survey,rozza2005shape,quarteroni2015reduced,rozza2008reduced,quarteroni2007numerical,quarteroni2007numerical,manzoni2012shape}.
For instance, partial differential equations (PDEs) depending on a set of parameters require quick and/or repeated evaluation in a variety of situations, including PDE-constrained optimization, real-time analysis of physical systems, and parameter estimation.  In order for the reduced-order solutions to be useful, an indicator or estimate of the error is essential.  Estimates of the error with respect to the full-order numerical solution have been developed~\cite{rozza2008reduced}, allowing for a controlled loss of accuracy and providing information on the fidelity of a given reduced model.  This is useful in instances where high accuracy of the full-order model is guaranteed, such as PDEs featuring smooth solutions or a sufficiently fine computational mesh.

Yet, the error in a full-order solution often depends on the value of the problem's parameters, and the assumption of highly accurate full-order approximations can result in overly optimistic error estimates for the reduced solution with respect to the underlying analytical solution.  With this in mind, recent work~\cite{chaudhry2020leastsquares} has made progress in developing error estimates for reduced-order models measured with respect to  exact solutions.  In~\cite{yano2018reduced}, an error estimate for exact solutions is developed where the coercivity constant is bounded below by the solution of a linear programming problem; however, the standard function space norm is replaced by a parameter-dependent ``energy norm''.  Establishing lower bounds of eigenvalues for variationally posed problems is critically important in other applications as well; see~\cites{sebestova2014two,hu2014lower2,hu2014lower, betcke2011numerical, cances2017guaranteed, you2019guaranteed}.

In this work, we consider problems where the coercive bilinear form has the form
\begin{equation}\label{eq:bilinear}
	a(u,v) = (\LL u, \M v)_Y,
\end{equation}
where $\LL$ and $\M$ are linear operators with domain $V$, densely embedded in a Hilbert space $X$, and each operator maps $V$ into another Hilbert space $Y$:
\begin{equation}
	\LL,\M\,:\, V\rightarrow Y.
\end{equation}
We show that the coercivity constant associated with~\cref{eq:bilinear} is characterized as a spectral value of a self-adjoint linear operator involving $\LL$ and $\M$.  For the case when this operator has the form $I + C$, where $C$ is a compact operator, and $I$ is the identity map,
 we present results on the convergence rate of numerical approximations to the coercivity constant.  We demonstrate that operators of this type arise in variationally-posed differential equations using several examples; the convergence theory is verified numerically.

The paper is organized as follows.  In~\Cref{sec:func_analysis}, we summarize relevant results from functional analysis and spectral theory.  In~\Cref{sec:coercivity}, we show the coercivity constant is the minimum of the Rayleigh quotient of a self-adjoint linear operator $K$, thus demonstrating that the coercivity constant is a spectral value.  In~\Cref{sec:numeric_convergence} we establish results on the rate of convergence when $K - I$ is a compact operator.  We apply our results to several differential equations~\Cref{sec:apps}, and demonstrate that the connection with a compact operator arises for these problems; we verify the convergence results for these examples numerically in~\Cref{sec:numerics}.  In~\Cref{sec:conclusion}, we present conclusions and possible directions for future work.

\section{Selected Results in Functional Analysis}\label{sec:func_analysis}

Before proceeding to the main contributions of the paper, we establish notation
 and review several
relevant results from functional analysis  that are used throughout this work.

\subsection{Riesz map, Gelfand triple, and adjoint operators}
For a Hilbert space $W$ and its dual, $W'$, we denote the canonical isometric isomorphism as $R_W: W\to W'$,
\begin{equation}\label{eq:riesz_map}
\langle R_W u, v \rangle_{W'\times W} = (u,v)_W,\quad \forall u,v\in W.
\end{equation}
In~\cref{eq:riesz_map} we use the notation of duality pairing; i.e., $\langle f,v\rangle_{W'\times W}\coloneqq f(v)$ for any $f\in W',\  v\in W$.  The operator $R_W$ is known as the \textbf{Riesz map}.
We note that $R_W$ is isometric in the sense that
\begin{equation}\label{riesz_norm}
\|R_W u\|_{W'} \coloneqq \sup_{0\neq v\in W} \frac{\left|  \langle R_W u, v \rangle_{W'\times W}  \right|}{\|v\|_W} = \|u\|_W,
\end{equation}
a property we  make use of later,
and  $W'$ is a Hilbert space with inner product
\begin{equation}\label{eq:dual_inner_product}
(f,g)_{W'} \coloneqq (R_W^{-1}f, R_W^{-1}g)_W.
\end{equation}

In the following sections, we make use of the concept of a \textbf{Gelfand triple}, which we now recall.  Let $V, X$ be Hilbert spaces such that $V \subset X$, with continuous and dense embedding.  More precisely, the inclusion map $v\mapsto v$ from $V$ into $X$ is continuous (bounded):
\begin{equation}
\exists C > 0 \text{  s.t.  }  \|v\|_X \leq C\|v\|_V, \quad \forall v\in V,
\end{equation}
and $V$ is a dense subspace of $X$ under the $X$-norm:
\begin{equation}
\overline{V}^{\|\cdot\|_X} = X.
\end{equation}
Here, $\overline{V}^{\|\cdot\|_X}$ denotes the closure of $V$ under the $X$-norm.

Identifying $X$ with its dual through the isometric isomorphism~\cref{eq:riesz_map}, we obtain
\begin{equation}\label{inclusion_chain}
V \subset X \subset V',
\end{equation}
with each embedding continuous and dense.  In this case, $X$ is known as a \textbf{pivot space}, and is regarded as a dense subspace of $V'$.  The term ``pivot space'' will also be used to describe any Hilbert space $Y$ that is identified with its dual $Y'$ through~\cref{eq:riesz_map}, without necessarily being part of a Gelfand triple.

One example of a Gelfand triple is a familiar result from the theory of Sobolev spaces:
\begin{equation}
H_0^1(\Omega) \subset L^2(\Omega) \subset H^{-1}(\Omega).
\end{equation}
Going forward, $V$ and $X$ will be understood as members of a Gelfand triple~\cref{inclusion_chain}.  Other common names for~\cref{inclusion_chain} include \textbf{rigged Hilbert space} or \textbf{equipped Hilbert space}.  For more details, see~\cites{aubin2007approximation,berezanskiui1968expansions,braess2007finite,gel2014generalized}.

\begin{remark}
It is important to note that once we consider $X$ to be a pivot space, the possibility to consider a (proper) subspace $V$ as a pivot space is no longer available.  That is, we cannot write $V = V'$, because by~\cref{inclusion_chain}, $V$ is a proper dense subspace of $V'$.
\end{remark}

As a consequence of~\cref{inclusion_chain}, if $f \in X$ and $v \in V$, then the expressions $\langle f, v\rangle_{V' \times V}$ and $(f,v)_X$ are synonymous.  That is, we simply write
\begin{equation}\label{eq:inner_product_duality}
\langle f, v\rangle_{V' \times V} =  (f,v)_X.
\end{equation}
 In particular, if $u \in V \subset X$, then for all $v \in V$,
\begin{equation}
\langle u, v\rangle_{V' \times V} =  (u,v)_X \neq (u,v)_V = \langle R_V u, v\rangle_{V' \times V},
\end{equation}
where $R_V: V \to V'$ is the Riesz map (see~\cref{eq:riesz_map}).  This is a restatement of the fact that $V$ cannot be considered a pivot space simultaneously with $X$.

If $A: W_1 \to W_2$ is a bounded linear map between normed spaces $W_1$ and $W_2$, its \textbf{adjoint} $A'$ is a linear map from $W_2' \to W_1'$ satisfying
\begin{equation}\label{eq:banach_adjoint}
	\left\langle A' f, w\right\rangle_{W_1'\times W_1} = \left\langle f, Aw\right\rangle_{W_2'\times W_2},\quad \forall f\in W_2',\  w\in W_1.
\end{equation}
$A'$ is a bounded linear operator that satisfies $\|A'\| = \|A\|$~\cite{kreyszig1978introductory}.

In the context of the Gelfand triple~\cref{inclusion_chain}, consider a bounded linear operator $A: V\to X$.  Since $X$ is identified with its dual space,~\cref{eq:inner_product_duality} holds, and thus the adjoint $A'$ is a bounded linear map satisfying
\begin{subequations}\label{eq:banach_adjoint_pivot}
	\begin{align}
		&A': X \to V'\\
		&\left\langle A' f, v\right\rangle_{V'\times V} = (f, Av)_X,\quad \forall f\in X,\ v\in V.
	\end{align}
\end{subequations}
We note that~\cref{eq:banach_adjoint_pivot} is valid for any Hilbert space $V$ and pivot space $X$.

In addition to the adjoint defined in~\cref{eq:banach_adjoint}, bounded linear operators between Hilbert spaces posses another adjoint, the \textbf{Hilbert-adjoint}.  Specifically, if $W_1$ and $W_2$ are Hilbert spaces, and $A: W_1 \to W_2$ is a bounded linear operator, the Hilbert-adjoint $A^*$ is a linear map from $W_2 \to W_1$ satisfying
\begin{equation}\label{eq:hilbert_adjoint}
	(A^* w_2, w_1)_{W_1} = (w_2, Aw_1)_{W_2},\quad \forall w_1\in W_1,\  w_2\in W_2.
\end{equation}
The Hilbert-adjoint is bounded, with $\|A^*\| = \|A\|$.

If $V$ is a Hilbert space, a bounded linear operator $A: V\to V$ is \textbf{self-adjoint} if $A = A^*$.

\begin{remark}
	The adjoints defined in~\cref{eq:banach_adjoint} and~\cref{eq:hilbert_adjoint} are well-known in functional analysis, but there is no generally accepted notation~\cites{kreyszig1978introductory,conway2019course,rudin1991functional,quarteroni2015reduced,aubin2007approximation,oden2012introduction}; we have adopted the notation from~\cite{conway2019course}.  If $V,X$ are arbitrary Hilbert spaces and $A: V\to X$ is a bounded linear operator, the two adjoints are related by
	\begin{equation}
		R_V A^* = A'R_X,
	\end{equation}
	where $R_V,R_X$ are the Riesz maps for $V$ and $X$ respectively, defined in~\cref{eq:riesz_map}.  If $X$ is a pivot space, as in~\cref{eq:banach_adjoint_pivot}, then the relationship simplifies to
	\begin{equation}
		R_V A^* = A'.
	\end{equation}
	See~\cites{oden2012introduction,aubin2007approximation} for details.
\end{remark}

\begin{remark}
There is a third notion of adjoint for densely-define linear operators on Hilbert spaces~\cites{kreyszig1978introductory,conway2019course,arnold2018finite}.  We do not use this definition to derive our results.  See Appendix A for more comments on this adjoint.
\end{remark}

\subsection{Spectral Theory of Bounded Linear Operators}\label{sec:spectral_theory}

In~\cref{sec:numeric_convergence}, we will apply theorems from~\cites{descloux1978spectrala, descloux1978spectralb} to establish convergence of numerical approximations of the coercivity constant in~\cref{eq:coercive_form}.  We state these theorems after recalling some notation and results from the spectral theory of bounded linear operators; see~\cites{chatelin2011spectral, riesz2012functional} for details.  For simplicity, we restrict attention to bounded linear operators on a Hilbert space, rather than the more general setting of Banach spaces.

Let $W$ be a complex Hilbert space and $A: W\to W$ a bounded linear operator.  For any $z\in \mathbb{C}$, define an associated operator $A_z = A - zI$, where $I$ is the identity map on $W$.  $z$ is a regular value of $A$ if
\begin{enumerate}
	\item $A_z:W \to W$ is a bijection.
	\item $R(A,z):=(A_z)^{-1}: W \to W$ is bounded.
\end{enumerate}
The set of all regular values of $A$ is called the \textbf{resolvent set} of $A$, and is denoted by $\rho(A)$.  The complement $\sigma(A):=\mathbb{C}\setminus\rho(A)$ is called the \textbf{spectrum} of $A$; a number $\lambda \in \sigma(A)$ is a \textbf{spectral value}.

In this work, we restrict attention to a subset of the spectrum, called the \textbf{point spectrum} $\sigma_p(A)$.  A complex number $\lambda$ belongs to $\sigma_p(A)$ if and only if $A_\lambda$ is not injective.  In this case, there exists a non-zero $w\in W$ such that
\begin{equation}
	(A - \lambda I)w = A_\lambda w = 0 \iff Aw = \lambda w.
\end{equation}
Thus, elements of the point spectrum are \textbf{eigenvalues} of $A$; this portion of the spectrum is relevant to the numerical approximation of the coercivity constant.  If $W$ is infinite-dimensional, there may exist spectral values which are not eigenvalues; see~\cites{chatelin2011spectral,kreyszig1978introductory,atkinson2005theoretical,rudin1991functional} for characterization of other spectral values.

Of particular importance is the case where $\lambda \in \sigma_p(A)$ is an isolated eigenvalue of finite algebraic multiplicity.  $\lambda$ is \textbf{isolated} if there exists a bounded, open set $U \subset\mathbb{C}$ such that $U \cap \sigma(A) = \{\lambda\}$, with boundary $\partial U$ that is rectifiable and contained in $\rho(A)$.   To define algebraic multiplicity, we first define the \textbf{spectral projection} corresponding to $\lambda$.  If $U$ is an open set satisfying the hypotheses above, the spectral projection is a linear operator $E_\lambda : W\to W$ defined by
\begin{equation}\label{eq:spectral_projection}
	E_{\lambda} = -\frac{1}{2\pi i} \int_{\partial U} R(A,z)\dif z = -\frac{1}{2\pi i} \int_{\partial U} (A - zI)^{-1}\dif z.
\end{equation}

The definition~\cref{eq:spectral_projection} is independent of the choice of the open set $U$.  The operator $E_\lambda$ is a projection onto its closed range $M \coloneqq E_\lambda(W)$, which is an invariant subspace for $A$.  The \textbf{algebraic multiplicity} of $\lambda$, denoted $m_a$ is the dimension of $M$.  Thus, for an isolated eigenvalue of finite algebraic multiplicity, $m_a = \text{dim}(M) < \infty$.  Since $M$ is invariant for $A$, the image satisfies $A_\lambda(M) \subset M$.  In fact, there exists an integer $\ell \geq 1$ such that following chain of inclusions holds:
\begin{equation}
	M \supset A_{\lambda}(M) \supset A_{\lambda}^{2}(M)\supset \dots \supset A_{\lambda}^{\ell}(M) = \{0\},
\end{equation}
each inclusion being proper.
The integer $\ell$ is called the \textbf{ascent} of the eigenvalue.  If $\ell = 1$, then all elements of $M$ are eigenvectors of $A$, that is, $M = \mathcal{N}(A_\lambda)$ is the nullspace of $A_\lambda$.  If $\ell > 1$, then $\mathcal{N}(A_\lambda)$ is a proper subspace of $M$; the elements of $M\setminus\mathcal{N}(A_\lambda)$ are \textbf{generalized eigenvectors} of $A$.  The geometric multiplicity is defined as $m_g = \text{dim}\left(\mathcal{N}(A_\lambda) \right)$; the multiplicities satisfy $m_g \leq m_a$.

\subsubsection{Numerical Approximation of the Spectrum}\label{sec:numerical_spectrum}
Later, we characterize the coercivity constant in~\cref{eq:coercive_form} as an isolated eigenvalue of finite (algebraic) multiplicity.  To establish convergence of corresponding numerical approximations, we utilize results from~\cites{descloux1978spectrala,descloux1978spectralb}, which we now recall.

Fix two continuous sesquilinear forms $\hat{a},b: V\times V\to \mathbb{C}$.  In addition, assume that $b$ is coercive and $\hat{a}(u,u) > 0$ for all $0\neq u \in V$.  Suppose that the bounded linear operator $A$ defined by
\begin{equation}\label{eq:solution_op}
	\hat{a}(u,v) = b(Au,v),\quad \forall u,v\in V,
\end{equation}
has an isolated eigenvalue $0\neq \lambda$ of finite algebraic multiplicity $m_a$.  Consider a family $\left( V_h\right)_{h > 0}$
of finite-dimensional subspaces that satisfy the following \emph{approximability property}~\cite{ern2004theory}:
\begin{equation}\label{eq:approximability}
	\lim_{h\to 0} \inf_{v_h \in V_h}\|v - v_h\|_V = 0,\quad \forall v\in V.
\end{equation}
Denoting by $P_h$, the $V$-orthogonal projector onto $V_h$,~\cref{eq:approximability} is equivalent to
\begin{equation}\label{eq:approximability2}
	\lim_{h\to 0}\|(I - P_h)v\|_V = 0,\quad \forall v\in V,
\end{equation}
where $I:V \to V$ is the identity operator.  The convergence in~\cref{eq:approximability2} is known as strong operator convergence or pointwise convergence; hence $\lim_{h\to 0} P_h v = v$ for all $v\in V$.

\begin{remark}
The approximability property~\cref{eq:approximability} (or~\cref{eq:approximability2}), is a rather mild condition; when $V$ is a Sobolev space, or a related space such as $H(\text{div};\Omega)$, many standard conforming finite element spaces satisfy the approximability property.  A notable exception occurs for $V = H_0(\text{curl};\Omega)\cap H(\text{div};\Omega)$, when $\Omega$ is a non-convex polyhedron in $\mathbb{R}^3$.  In this case, nodal Lagrange finite element spaces do not satisfy the approximability property~\cite{ern2004theory}.
\end{remark}

In addition, suppose that the sequence of linear operators $A_h: V_h \to V_h$ defined by:
\begin{equation}\label{eq:discrete_solution_op}
	\hat{a}(u_h,v_h) = b(A_h u_h,v_h),\quad \forall u_h,v_h\in V_h,
\end{equation}
satisfies
\begin{equation}\label{eq:discrete_norm_convergence}
	\lim_{h\to 0}\sup_{0\neq u_h \in V_h}\frac{\|(A - A_h)u_h\|_V}{\|u_h\|_V} = 0,
\end{equation}
i.e., the sequence $A_h$ converges to $A$ in norm when it is restricted to $V_h$.

Under these assumptions, it is shown in~\cite{descloux1978spectrala} that for $h$ sufficiently small, there are exactly $m_a$ discrete eigenvalues $\lambda^{(1)}_h,\dots, \lambda^{(m_a)}_h$ of $A_h$ inside the set $U$ from~\cref{eq:spectral_projection}, and they converge to $\lambda$ as $h\to 0$.

For the operator $A$ defined in~\cref{eq:solution_op}, there exists an associated bounded linear operator $A^\dagger: V\to V$ defined by~\cite{descloux1978spectralb}:
\begin{equation}\label{eq:b_adjoint}
	b(Au,v) = b(u, A^\dagger v),\quad \forall u,v\in V.
\end{equation}
If $b(u,v) = \overline{b(v,u)}$, i.e., $b$ is an inner product on $V$, this is conceptually identical to the adjoint defined in~\cref{eq:hilbert_adjoint}.  However, the operator $A^\dagger$ is a well-defined bounded linear operator even if $b$ does not define an inner product.  The complex conjugate $\overline{\lambda}$ is an isolated eigenvalue of finite multiplicity $m_a$ for $A^\dagger$, and can be separated from the rest of the spectrum by an open set $U'$~\cite{descloux1978spectralb}.  In analogy with~\cref{eq:spectral_projection}, let
\begin{equation}
	E_\lambda^\dagger = -\frac{1}{2\pi i} \int_{\partial U'} (A^\dagger  - zI)^{-1}\dif z
\end{equation}
be the corresponding spectral projection for $A^\dagger$.  Finally, define the quantities
\begin{subequations}\label{eq:gamma}
	\begin{align}
		\gamma_h &= \sup_{\substack{u \in E_\lambda(V) \\ \|u\|_V = 1}}\inf_{v_h \in V_h}\|u - v_h\|_V\\
		\gamma_h^\dagger &= \sup_{\substack{u \in E_\lambda^\dagger (V) \\ \|u\|_V = 1}}\inf_{v_h \in V_h}\|u - v_h\|_V
	\end{align}
\end{subequations}

Then, the discrete eigenvalues of $A_h$ converge to $\lambda$ as follows (Theorem 3 of~\cite{descloux1978spectralb}):
\begin{subequations}\label{eq:eig_convergence}
	\begin{align}
		\max_{i=1,\dots,m_a} |\lambda - \lambda^{(i)}_h| &\leq C_1 \left(\gamma_h \gamma_h^\dagger\right)^{1/\ell}\\
		\min_{i=1,\dots,m_a} |\lambda - \lambda^{(i)}_h| &\leq C_2 \left(\gamma_h \gamma_h^\dagger\right)^{m_g/m_a},
	\end{align}
\end{subequations}
where $\ell$ is the ascent of $\lambda$ and $m_g$ is the geometric multiplicity.

\section{Characterization of the Coercivity Constant}\label{sec:coercivity}

In this section, we consider~\cref{eq:coercive_form} and characterize the coercivity constant $\alpha$ of a coercive bilinear form $a(u,v)$ through the Rayleigh quotient of a bounded and self-adjoint operator.
Let $X$ and $Y$ be pivot spaces, and let $V$ be a Hilbert space that is continuously and densely embedded in $X$, i.e. $X,V,V'$ form a Gelfand triple.  Let $\LL, \M: V\to Y$ be bounded linear operators satisfying
\begin{equation}
(\LL u,\M u)_Y \geq \alpha \|u\|_V^2, \quad \forall u\in V.
\end{equation}
From the boundedness of $\LL$ and $\M$ it follows that $\left|(\LL u,\M v)_Y\right| \leq \beta\|u\|_V \|v\|_V$, where $\beta = \|\LL\| \|\M\|$.
Thus, the bilinear form
\begin{equation}
a(u,v) \coloneqq (\LL u, \M v)_Y
\end{equation}
is continuous and coercive.

Since $\LL$ and $\M$ are bounded, and $Y$ is a pivot space, the adjoints $\LL'$ and $\M'$ exist as continuous linear operators from $Y \to V'$ satisfying (see~\cref{eq:banach_adjoint_pivot})
\begin{subequations}\label{eq:operator_adjoints}
\begin{align}
	&\left\langle \LL'g, u \right\rangle_{V'\times V} = (g,\LL u)_Y,\\
	&\left\langle \M'g, u \right\rangle_{V'\times V} = (g,\M u)_Y,
\end{align}
\end{subequations}
for every $g\in Y$ and $u\in V$.  Since $\LL'$ and $\M'$ are linear and continuous, then so is the operator $A: V \to V'$ defined by
\begin{equation}\label{eq:definition_A}
	A \coloneqq \frac{1}{2}\left(\M'\LL + \LL'\M \right).
\end{equation}
From~\cref{eq:operator_adjoints}, it follows that for $u,v\in V$:
\begin{align}\label{eq:A_selfadjoint}
\begin{split}
\left\langle Au,v  \right\rangle	_{V'\times V}
&= \frac{1}{2}\left\langle \M'\LL u,v  \right\rangle_{V'\times V} + \frac{1}{2}\left\langle \LL'\M u,v  \right\rangle_{V'\times V}\\
&= \frac{1}{2}(\LL u, \M v)_Y + \frac{1}{2}(\M u,\LL v)_Y\\
&= \frac{1}{2}(\LL v, \M u)_Y + \frac{1}{2}(\M v, \LL u)_Y\\
&= \left\langle Av,u  \right\rangle	_{V'\times V}
\end{split}
\end{align}
Also, for $v = u$, we have
\begin{equation}\label{eq:A_bilinear}
	\left\langle Au,u  \right\rangle	_{V'\times V} = (\LL u, \M u)_Y = a(u,u).
\end{equation}
Let the operator $K: V \to V$ be defined as
\begin{equation}\label{eq:K_definition}
	K\coloneqq A^{-1}R_V,
\end{equation}
where $R_V$ is the Riesz map on $V$, given by \cref{eq:riesz_map}.  We now proceed to show that the coercivity constant $\alpha$ is characterized as the infimum of the Rayleigh quotient of $K^{-1}: V \to V$. We begin by showing that $A$ is invertible, so that $K$ is well-defined, and later establish the connection between the coercivity constant and the Rayleigh quotient of $K^{-1}$.
\begin{theorem}\label{thm:A_bijection}
	The operator $A$ defined by~\cref{eq:definition_A} is a bijection from $V \to V'$.
\end{theorem}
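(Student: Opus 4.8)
The plan is to recognize that $A$ simply encodes the variational problem associated with $a$, and to invoke the Lax--Milgram theorem. By~\cref{eq:A_bilinear} the form $a(u,v) = \langle Au,v\rangle_{V'\times V}$ satisfies $a(u,u)\ge\alpha\|u\|_V^2$, and by hypothesis it is bounded, $|a(u,v)|\le\beta\|u\|_V\|v\|_V$. Lax--Milgram then guarantees that for every $F\in V'$ there is a unique $u\in V$ with $a(u,v)=\langle F,v\rangle_{V'\times V}$ for all $v\in V$; since, using the pivot-space identification, this identity is precisely the statement $Au=F$ in $V'$, the operator $A$ is a bijection, with $\|A^{-1}\|\le 1/\alpha$ as a bonus.

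If a self-contained argument is preferred, I would proceed in three steps. First, injectivity: if $Au=0$ then $0=\langle Au,u\rangle_{V'\times V}=a(u,u)\ge\alpha\|u\|_V^2$, forcing $u=0$. Second, $A$ is bounded below: for $0\neq u\in V$,
\[
\|Au\|_{V'} = \sup_{0\neq v\in V}\frac{|\langle Au,v\rangle_{V'\times V}|}{\|v\|_V}\ge\frac{\langle Au,u\rangle_{V'\times V}}{\|u\|_V}\ge\alpha\|u\|_V,
\]
which, together with completeness of $V$ and $V'$, implies that $R(A)$ is a closed subspace of $V'$. Third, $R(A)$ is dense in $V'$: if not, by closedness there is a nonzero continuous functional on $V'$ annihilating $R(A)$; reflexivity of the Hilbert space $V$ identifies this functional with some $w\in V$ satisfying $\langle Au,w\rangle_{V'\times V}=0$ for all $u\in V$, and taking $u=w$ gives $0=a(w,w)\ge\alpha\|w\|_V^2$, so $w=0$, a contradiction. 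Hence $R(A)=V'$, $A$ is bijective, and boundedness of $A^{-1}$ follows from the lower bound (or the open mapping theorem).

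There is essentially no deep obstacle here — the statement is Lax--Milgram in disguise — but the one point that requires care is the bookkeeping of the duality pairing: one must use the pivot-space identification~\cref{eq:inner_product_duality} and the definition~\cref{eq:banach_adjoint_pivot} of the adjoints $\LL',\M'$ consistently, so that ``$a(u,v)=\langle F,v\rangle_{V'\times V}$ for all $v$'' genuinely means ``$Au=F$ in $V'$'', and so that the symmetry relation~\cref{eq:A_selfadjoint} is interpreted for $A$ regarded as a map into $V'$ rather than into $V$. If I were to present only one version, I would give the direct three-step argument, since it keeps the exposition self-contained and makes the estimate $\|A^{-1}\|\le 1/\alpha$ explicit, which is convenient for the later analysis of $K=A^{-1}R_V$.
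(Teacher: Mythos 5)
Your self-contained three-step argument is essentially the paper's own proof: injectivity and the lower bound $\|Au\|_{V'}\ge\alpha\|u\|_V$ from coercivity, closed range from this bound plus completeness, and surjectivity by showing any functional annihilating the range vanishes. The only difference is cosmetic: the paper phrases the density step via the orthogonal decomposition of the Hilbert space $V'$ and the Riesz map, and uses the symmetry~\cref{eq:A_selfadjoint} to reduce to injectivity, whereas you identify the annihilator with some $w\in V$ by reflexivity and apply coercivity on the diagonal directly; both work. The Lax--Milgram shortcut is also legitimate (the paper itself notes the connection in the remark following the theorem), but one imprecision should be fixed: by~\cref{eq:A_selfadjoint} one has $\langle Au,v\rangle_{V'\times V}=\tfrac12(\LL u,\M v)_Y+\tfrac12(\M u,\LL v)_Y$, which is the \emph{symmetric part} of $a(u,v)$, not $a(u,v)$ itself; \cref{eq:A_bilinear} only asserts equality on the diagonal. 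Since the symmetrized form is still bounded and has the same diagonal, hence the same coercivity constant, Lax--Milgram applies to it and the conclusion (and the bound $\|A^{-1}\|\le 1/\alpha$) is unaffected.
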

\begin{proof}
	Let $0\neq u\in V$.  By the coercivity of $a(\cdot,\cdot)$, we have:
	\begin{align}\label{eq:injective_A}
	\begin{split}
		\|Au\|_{V'} = \sup_{0\neq v}\frac{|\langle Au, v\rangle|}{\|v\|_V} &\geq \frac{|\langle Au, u\rangle|}{\|u\|_V}\\
		& = \frac{|a(u,u)|}{\|u\|_V} \geq \frac{\alpha\|u\|_V^2}{\|u\|_V} = \alpha \|u\|_V.
	\end{split}
    \end{align}
Thus, if $0\neq u$, then $Au \neq 0$, and $A$ is injective.
Since $V'$ is a Hilbert space, it follows that
\begin{equation}
	V' = \overline{A(V)} \oplus \overline{A(V)}^\perp,
\end{equation}
i.e., $V'$ is the direct sum of the closure of $A(V)$ and its orthogonal complement under the inner product~\cref{eq:dual_inner_product}.

Let $\varphi \in \overline{A(V)}^\perp$.  Then $\varphi \perp A(V)$, so for any $u\in V$, $(\varphi, Au)_{V'} = 0$.  Let $R_V: V\to V'$ be the Riesz map.  Then for any $u \in V$, using the definition of the inner product on $V'$ (see~\cref{eq:dual_inner_product}), and~\cref{eq:A_selfadjoint}, we obtain
\begin{align}
\begin{split}
	0 = (\varphi, Au)_{V'} &= (R_V^{-1}\varphi, R_V^{-1}Au)_V =(R_V^{-1}Au,R_V^{-1}\varphi)_V\\
	&= \left\langle Au, R_V^{-1}\varphi\right\rangle_{V'\times V} =  \left\langle AR_V^{-1}\varphi, u\right\rangle_{V'\times V},
	\end{split}
\end{align}
so $AR_V^{-1}\varphi = 0$.  By injectivity, it follows that $R_V^{-1}\varphi = 0$.  Since the Riesz map is an isometric isomorphism, it follows that $\varphi = 0$.  Thus, $\overline{A(V)}^\perp = \{0\}$ and $V' = \overline{A(V)}$.

Let $\varphi\in \overline{A(V)}$.  Then there exists a sequence $\{u_n\} \subset V$ such that $\varphi = \lim\limits_{n\to\infty}A u_n$.  In particular, $\{Au_n\}$ is a Cauchy sequence in $V'$.  By~\cref{eq:injective_A}, we have $\|Au_n - Au_m\|_{V'} \geq \alpha\|u_n - u_m\|_V$, showing that $\{u_n\}$ is Cauchy in $V$.  Since $V$ is complete, $u_n\to u\in V$, and by continuity of $A$, we have
\begin{equation*}
\varphi = \lim_{n}Au_n = Au.
\end{equation*}
So $\varphi \in A(V)$.  Thus, $A(V) = \overline{A(V)} = V'$.
\end{proof}
Since $V$ and $V'$ are complete, the open mapping theorem~\cite{kreyszig1978introductory} shows that $A^{-1}:V'\to V$ is continuous.  Thus, it follows that the operator $K \coloneqq A^{-1}R_V$ is a well-defined, continuous linear operator mapping $V$ onto itself, with bounded inverse $K^{-1}$.  \Cref{thm:A_bijection} should be compared to the Lax-Milgram theorem for for coercive bilinear forms~\cite{evans2010partial}; the difference here is that we consider the symmetric part of a bilinear form, and an operator between $V$ and its dual.
\begin{theorem}\label{thm:K_symm}
The operator
	$K = A^{-1}R_V$ is self-adjoint and the largest positive constant $\alpha$ satisfying~\cref{eq:coercive_form} is the infimum of the Rayleigh quotient of $K^{-1}$. That is,
	\begin{equation}\label{eq:coercivity_K}
		\alpha \coloneqq \inf_{0\neq u}\frac{a(u,u)}{\|u\|_V^2} = \inf_{0\neq u}\frac{(K^{-1}u,u)_V}{(u,u)_V}.
	\end{equation}
\end{theorem}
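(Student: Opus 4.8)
The plan is to establish the two claims in turn: first that $K = A^{-1}R_V$ is self-adjoint as an operator $V \to V$, and then that $\inf$ of its Rayleigh quotient equals $\alpha$. For self-adjointness, I would compute $(Ku, v)_V$ directly using the definitions. Write $w = Ku = A^{-1}R_V u$, so that $Aw = R_V u$; then $(Ku, v)_V = (w, v)_V = \langle R_V w, v\rangle_{V'\times V}$ is not immediately the right shape, so instead I would go the other way: $(Ku, v)_V = (A^{-1}R_V u, v)_V$. Using the Riesz map, $(A^{-1}R_V u, v)_V = \langle R_V(A^{-1}R_V u), v\rangle_{V'\times V}$, which still is awkward. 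The cleaner route is to exploit \cref{eq:A_selfadjoint}: for any $p, q \in V'$, set $s = A^{-1}p$, $t = A^{-1}q$, so $\langle p, A^{-1}q\rangle_{V'\times V} = \langle As, t\rangle_{V'\times V} = \langle At, s\rangle_{V'\times V} = \langle q, A^{-1}p\rangle_{V'\times V}$; i.e. $A^{-1}: V' \to V$ is symmetric with respect to the duality pairing. Then $(Ku, v)_V = (A^{-1}R_V u, v)_V$; since $A^{-1}R_V u \in V$, write this as $\langle R_V u, A^{-1}R_V v\rangle$? To make that step I need $(x, y)_V = \langle R_V y, x\rangle_{V'\times V}$ for $x, y \in V$, which is exactly \cref{eq:riesz_map}. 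So $(A^{-1}R_V u, v)_V = \langle R_V v, A^{-1}R_V u\rangle_{V'\times V} = \langle R_V u, A^{-1}R_V v\rangle_{V'\times V}$ by the symmetry of $A^{-1}$, and this last expression equals $(A^{-1}R_V v, u)_V = (Kv, u)_V = (u, Kv)_V$ (the inner product on the real Hilbert space $V$ being symmetric). Hence $K = K^*$, and $K^{-1}$ is self-adjoint as well.

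For the second claim, the strategy is a change of variables in the Rayleigh quotient. Starting from the characterization in \cref{eq:A_bilinear}, namely $a(u,u) = \langle Au, u\rangle_{V'\times V}$, I would substitute $u = Kw = A^{-1}R_V w$ for arbitrary $0 \neq w \in V$ (this is a bijective substitution since $K$ is a bijection on $V$). Then $Au = R_V w$, so
\begin{equation*}
a(Kw, Kw) = \langle A(Kw), Kw\rangle_{V'\times V} = \langle R_V w, Kw\rangle_{V'\times V} = (w, Kw)_V,
\end{equation*}
using \cref{eq:riesz_map} in the last step. Meanwhile $\|Kw\|_V^2 = (Kw, Kw)_V = (K^{-1}(Kw), Kw)_V \cdot$? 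No — more directly, $(w, Kw)_V$ is what we want in the numerator after relabeling. Let me instead set the ratio up with $v = Ku$: then
\begin{equation*}
\frac{a(v,v)}{\|v\|_V^2} = \frac{(u, Ku)_V}{(Ku, Ku)_V}.
\end{equation*}
To convert this into a Rayleigh quotient of $K^{-1}$, relabel $x = Ku$, so $u = K^{-1}x$ and the ratio becomes $\frac{(K^{-1}x, x)_V}{(x,x)_V}$, using symmetry of the $V$-inner product to write $(u, Ku)_V = (K^{-1}x, x)_V$. As $u$ ranges over $V \setminus \{0\}$, so does $v = x = Ku$, so taking infima over both sides gives exactly \cref{eq:coercivity_K}. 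Combined with the fact that $\alpha$ is by definition the largest constant satisfying \cref{eq:coercive_form}, equivalently the infimum of $a(u,u)/\|u\|_V^2$, the identity is established.

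The only genuine subtlety I anticipate is bookkeeping with the three pairings — the $V$-inner product $(\cdot,\cdot)_V$, the duality pairing $\langle\cdot,\cdot\rangle_{V'\times V}$, and the Riesz map $R_V$ connecting them — and making sure each application of \cref{eq:riesz_map} and of the symmetry relation \cref{eq:A_selfadjoint} is in the correct direction. There is no analytic difficulty: invertibility of $A$ (hence well-definedness of $K$ and $K^{-1}$) is already supplied by \cref{thm:A_bijection} and the open mapping theorem as noted after it, and the infimum being attained or not is irrelevant since the claim is only an identity of infima. I would also remark that, since $K$ is self-adjoint and bijective with bounded inverse, $K^{-1}$ is self-adjoint and its spectrum is bounded away from zero, so $\alpha = \inf \sigma(K^{-1}) > 0$ — this is the sense in which $\alpha$ is a spectral value, which motivates the numerical approximation results of the next section.
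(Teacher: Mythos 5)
Your argument is correct and follows essentially the same route as the paper: both rest on the symmetry of $A$ from \cref{eq:A_selfadjoint} (which you repackage as symmetry of $A^{-1}$ under the duality pairing) and on \cref{eq:A_bilinear}. The only cosmetic difference is that the paper obtains the Rayleigh-quotient identity pointwise by computing $(K^{-1}u,u)_V = \langle Au,u\rangle_{V'\times V} = a(u,u)$ directly from $K^{-1}=R_V^{-1}A$, whereas you reach the same identity via the change of variables $x = Kw$; both are valid.
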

\begin{proof}
	If $u,v \in V$, then using~\cref{eq:A_selfadjoint}, we obtain
	\begin{align}
    \begin{split}\label{eq:selfadjoint}
		(Ku,v)_V &= (v,Ku)_V = \langle R_V v, Ku\rangle_{V'\times V} = \langle AA^{-1}R_V v, Ku\rangle_{V'\times V}\\
		&= \langle AKv, Ku\rangle_{V'\times V} = \langle AKu,Kv\rangle_{V'\times V} = \langle R_V u, Kv\rangle_{V'\times V}\\
		&= (u,Kv)_V.
	\end{split}
	\end{align}
  Thus, $K$ is self-adjoint, and so is its inverse $K^{-1} = R_V^{-1}A$.  By~\cref{eq:A_bilinear}, we have
\begin{equation}
(K^{-1}u,u)_V = (R_V^{-1}Au,u)_V = \langle Au, u\rangle_{V'\times V} = a(u,u),
\end{equation}
so that~\cref{eq:coercivity_K} follows.
\end{proof}

It is well-known from the spectral theory of bounded, self-adjoint operators that the infimum of the Rayleigh quotient is a spectral value of the corresponding operator (see Theorem 9.2--3 in~\cite{kreyszig1978introductory}).
The coercivity constant is thus the smallest spectral value of the operator $K^{-1}$.  However, it does not follow that $\alpha$ is an eigenvalue; it may belong to $\sigma(K^{-1})\setminus\sigma_p(K^{-1})$, which presents numerical difficulties in its approximation.

However, with appropriate assumptions on the spaces, the operator $K$ can be characterized as a perturbation of a compact operator, and using this relationship, we show that the non-unit spectral values of $K$ are indeed eigenvalues related to the spectrum of a compact operator.  In this case, the coercivity constant is the smallest eigenvalue of $K^{-1}$, and can be approximated numerically with a high rate of convergence.

\begin{remark}\label{rem:complex_hilbert_space}
The spectral theory of linear operators is fully developed only in the case of complex Hilbert spaces (or complex Banach spaces); cf.~\cref{sec:spectral_theory}.  However, when working within the framework of a real Hilbert $H$, a satisfactory spectral theory can be recovered by viewing $H$ as a subspace of its \emph{complexification}, which is a complex Hilbert space.  See~\cites{MR1115240,MR2344656} for more details.  However, since the operator $K = A^{-1}R_V$ is self-adjoint, its spectrum is real, and nothing is lost by limiting our attention to real Hilbert spaces;~\cites{MR1115240,boffi2010finite}.
\end{remark}

\section{Numerical Approximation of the Coercivity Constant}\label{sec:numeric_convergence}
In~\cref{sec:apps}, we analyze the self-adjoint operator $K$ in~\cref{eq:K_definition} for several variationally posed differential equations, both ordinary (ODEs) and partial (PDEs).
For those examples, we show that operator $K$ has the form
\begin{equation}\label{eq:I_C}
	Ku = (I + C)u,
\end{equation}
where $I:V\to V$ is the identity, and $C: V\to V$ is a compact operator.  We analyze the coercivity constant of~\cref{eq:I_C} in this section, deriving convergence rates for discrete approximations.

\subsection{Representation of the Coercivity Constant}

Since $K$ is self-adjoint, it follows that $C$ is also self-adjoint.  The spectral theory of~\cref{sec:spectral_theory} is simplified when considering compact self-adjoint operators~\cite{conway2019course}.  In particular, the spectrum is a bounded subset of $\mathbb{R}$, and every non-zero $\lambda \in \sigma(C)$ is an eigenvalue.  Furthermore, each non-zero eigenvalue is isolated, has finite algebraic multiplicity, and ascent $\ell = 1$~\cite{boffi2010finite}.  In particular, the algebraic and geometric multiplicities coincide; $m_a = m_g$.

The spectral projection~\cref{eq:spectral_projection} corresponding to each non-zero eigenvalue also simplifies; if $0\neq \lambda\in \sigma(C)$ has multiplicity $m$, there exists an orthonormal set $\{e^{(\lambda)}_1,\dots,e^{(\lambda)}_m\}\subset V$ such that
\begin{equation}
	E_\lambda u = \sum_{k=1}^m \left(u,e^{(\lambda)}_k\right)_V e^{(\lambda)}_k,\quad\forall u\in V.
\end{equation}

Furthermore, the operator $C$ has the representation $C = \sum\limits_{0\neq \lambda \in \sigma(C)} \lambda E_\lambda$.  Thus, counting with multiplicity, there exists a sequence $\{\lambda_1,\lambda_2,\dots\}$ of eigenvalues $\lambda_n \in \mathbb{R}\setminus \{0\}$ with $\lim_{n\to\infty}\lambda_n = 0$, and an orthonormal basis $\{e_1, e_2,\dots\}$ for $\mathcal{N}(C)^\perp$, the orthogonal complement of the null-space of $C$, such that
\begin{equation}\label{eq:series_C}
	Cu = Ku - u = \sum_{n=1}^\infty \lambda_n\ (u, e_n)_V\ e_n,\quad \forall u\in V.
\end{equation}

If $V$ is separable, then there also exists an orthonormal basis $\{\hat{e}_n\}_{n=1}^\infty$ for the null space $\mathcal{N}(C)$.  In the context of differential equations, $V$ is a Sobolev space or a related space such as $H(\text{div};\Omega)$, which are separable. 
In the case of a separable space $V$, every $u\in V$ has the expansion
\begin{equation}\label{eq:series_u}
	u = \sum_{n = 1}^\infty (u,\hat{e}_n)_V\ \hat{e}_n + \sum_{n=1}^\infty (u,e_n)_V\ e_n.
\end{equation}
Combining~\cref{eq:series_C} and~\cref{eq:series_u}, we obtain
\begin{equation}\label{eq:series_K}
	Ku = \sum_{n = 1}^\infty (u,\hat{e}_n)_V\ \hat{e}_n + \sum_{n=1}^\infty (1 + \lambda_n)(u,e_n)_V\ e_n,
\end{equation}
from which it follows that
\begin{equation}\label{eq:series_inv_K}
	K^{-1}u = \sum_{n = 1}^\infty (u,\hat{e}_n)_V\ \hat{e}_n + \sum_{n=1}^\infty \frac{1}{1 + \lambda_n}(u,e_n)_V\ e_n.
\end{equation}

Taking the inner product of~\cref{eq:series_inv_K} with $u$, results in
\begin{equation}\label{eq:Kinv_inner}
	(K^{-1}u, u)_V = \sum_{n = 1}^\infty \left|(u,\hat{e}_n)_V\right|^2 + \sum_{n = 1}^\infty \frac{1}{1 + \lambda_n}\left|(u,e_n)_V\right|^2.
\end{equation}

Recalling the relationship between the coercivity constant and $K^{-1}$ in~\cref{eq:coercivity_K}, we have $(K^{-1}u,u)_V /\|u\|_V^2 \geq \alpha > 0$.  Comparing with~\cref{eq:Kinv_inner}, it follows that $(1 + \lambda_n)^{-1}
> 0$, and thus $\lambda_n > -1$;  furthermore, the coercivity constant satisfies
\begin{equation}\label{eq:alpha_min}
	\alpha = \min\left\{ 1,\ \inf_{n} \frac{1}{1 + \lambda_n} \right\}.
\end{equation}

Since $\lambda_n \to 0$ as $n\to\infty$,~\cref{eq:alpha_min} simplifies to
\begin{equation}\label{eq:alpha_inf}
	\alpha = \inf_{n} \frac{1}{1 + \lambda_n}.
\end{equation}

If at least one eigenvalue $\lambda_n > 0$, then the coercivity constant is related to the largest eigenvalue of the operator $C$, which is an isolated eigenvalue of finite multiplicity.  On the other hand, if $\lambda_n < 0$ for all $n$, then $\alpha = 1$ and complications arise.  If $\mathcal{N}(C) = \{0\}$, then $\alpha$ is not an eigenvalue of $K^{-1}$ (and all sums involving $\hat{e}_n$ above are zero).  If $\mathcal{N}(C) \neq \{0\}$, then $\alpha$ is an eigenvalue, but is not isolated, and may have infinite multiplicity.  In the next section, we assume that $\lambda_n > 0$ for at least one $n\in \mathbb{N}$.

\subsection{Rate of Convergence}

We now consider the convergence of numerical approximations to the coercivity constant.  In order to derive a finite-dimensional variational problem, we need the following lemma:
\begin{lemma}\label{lemma:K_variational}Let $R_V$, $A$, and $K$ be defined as in~\cref{eq:riesz_map},~\cref{eq:definition_A}, and~\cref{eq:K_definition}, respectively.  Then for $u \in V$, the evaluation of $w = Ku$ is equivalent to the problem: find $w \in V$ such that
\begin{equation}\label{eq:variational_def_K}
	\frac{1}{2}(\LL w, \M v)_Y + \frac{1}{2}(\M w, \LL v)_Y = (u,v)_V,\quad \forall v\in V.
\end{equation}
\end{lemma}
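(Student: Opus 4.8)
The plan is to unwind the definitions $K = A^{-1}R_V$ and $A = \frac{1}{2}(\M'\LL + \LL'\M)$ directly. Starting from $w = Ku$, I would multiply on the left by $A$ to get the equivalent statement $Aw = R_V u$, which is an equation in $V'$. The hypothesis $u \in V$ together with \Cref{thm:A_bijection} guarantees that $A$ is a bijection, so this reformulation is genuinely equivalent rather than merely necessary. Two elements of $V'$ are equal precisely when they agree as functionals on all of $V$, so $Aw = R_V u$ is equivalent to $\langle Aw, v\rangle_{V'\times V} = \langle R_V u, v\rangle_{V'\times V}$ for all $v \in V$.

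Next I would identify both sides of that duality pairing. For the right-hand side, the defining property of the Riesz map \cref{eq:riesz_map} gives $\langle R_V u, v\rangle_{V'\times V} = (u,v)_V$. For the left-hand side, I would apply the computation already carried out in \cref{eq:A_selfadjoint} (or equivalently expand $A = \frac12(\M'\LL + \LL'\M)$ and use the adjoint relations \cref{eq:operator_adjoints}), which yields $\langle Aw, v\rangle_{V'\times V} = \frac12 (\LL w, \M v)_Y + \frac12(\M w, \LL v)_Y$. Substituting these two identities into $\langle Aw, v\rangle = \langle R_V u, v\rangle$ produces exactly \cref{eq:variational_def_K}, and conversely any $w \in V$ satisfying \cref{eq:variational_def_K} for all $v$ satisfies $\langle Aw - R_V u, v\rangle_{V'\times V} = 0$ for all $v$, hence $Aw = R_V u$, hence $w = A^{-1}R_V u = Ku$.

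There is really no serious obstacle here — the lemma is essentially a bookkeeping restatement that packages the action of $K$ as a variational problem so that standard Galerkin discretization can be applied in the subsequent convergence analysis. The only point requiring a moment's care is making sure the equivalence runs in both directions, which is where the bijectivity of $A$ from \Cref{thm:A_bijection} (and the invertibility of $R_V$) is invoked; without it one would only get that $Ku$ solves \cref{eq:variational_def_K}, not that the solution is unique and equals $Ku$. I would state the argument in the form ``$w = Ku \iff Aw = R_V u \iff \langle Aw,v\rangle = (u,v)_V \ \forall v \iff \eqref{eq:variational_def_K}$'' and let \cref{eq:A_selfadjoint} do the algebraic work.
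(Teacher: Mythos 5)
Your proposal is correct and follows essentially the same route as the paper's proof: reduce $w = Ku$ to $Aw = R_V u$ in $V'$, test against arbitrary $v \in V$, and identify the two sides via the Riesz map and the computation in \cref{eq:A_selfadjoint}. Your explicit attention to the reverse implication (using the bijectivity of $A$ from \Cref{thm:A_bijection}) is a slightly more careful articulation of the equivalence than the paper spells out, but it is the same argument.
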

\begin{proof} From~\cref{thm:A_bijection}, the operator $K: V\to V$ is well-defined.  Since $K = A^{-1}R_V$, $w = Ku$ if and only if $Aw = R_V u \in V'$.  Thus, for any $v\in V$, we have
\begin{equation}\label{eq:Ru_Aw}
	\langle R_V u, v\rangle_{V'\times V} = \langle Aw,v\rangle_{V'\times V}.
\end{equation}
By the definition of the Riesz map~\cref{eq:riesz_map}, the left hand side of~\cref{eq:Ru_Aw} equals the inner product $(u,v)_V$.

By the equalities in~\cref{eq:A_selfadjoint}, it follows that the right hand side of~\cref{eq:Ru_Aw} equals $\frac{1}{2}(\LL w, \M v)_Y + \frac{1}{2}(\M w, \LL v)_Y$.  That is,~\eqref{eq:variational_def_K} holds.

\end{proof}
It follows from~\cref{lemma:K_variational} that
\begin{equation}\label{eq:Kinv_var}
	(K^{-1}w, v)_V = \frac{1}{2}(\LL w, \M v)_Y + \frac{1}{2}(\M w, \LL v)_Y,\quad\forall w,v\in V.
\end{equation}
We now consider a family of finite-dimensional subspaces $(V_h)_h$ that satisfy the approximability property~\cref{eq:approximability} (or~\cref{eq:approximability2}).
We define an analogous discrete operator $T_h: V_h \to V_h$ by
\begin{equation}\label{eq:Th_var}
	(T_h w_h, v_h)_V = \frac{1}{2}(\LL w_h, \M v_h)_Y + \frac{1}{2}(\M w_h, \LL v_h)_Y,\quad\forall w_h,v_h\in V_h.
\end{equation}
From~\cref{eq:Kinv_var} and~\cref{eq:Th_var}, it follows that for $w_h, v_h \in V_h$,
\begin{equation}
	(T_h w_h, v_h)_V = (K^{-1}w_h, v_h)_V = (P_h K^{-1}w_h, v_h)_V,\quad \forall w_h,v_h\in V_h.
\end{equation}

Thus, $T_h = P_h K^{-1}\vert_{V_h}$, and from~\cref{eq:approximability2}, we see that $T_h \to K^{-1}\vert_{V_h}$ pointwise. Given that $V_h$ is finite-dimensional, pointwise (strong operator) convergence implies convergence in norm:
\begin{equation}\label{eq:operator_convergence}
	\lim_{h\to 0}\sup_{0\neq u_h\in V_h} \frac{\|(K^{-1} - T_h)u_h \|}{\|u_h\|_V} = 0.
\end{equation}

Returning to~\cref{eq:alpha_min} and the discussion which precedes it, we assume that there exists at least one eigenvalue $\lambda_n > 0$.  Then the coercivity constant is an isolated eigenvalue of $K^{-1}$ with finite algebraic multiplicity $m$, which coincides with the geometric multiplicity.  Defining the bilinear forms
\begin{subequations}\label{eq:cc_bilinear_forms}
	\begin{align}
		\hat{a}(u,v) &= \frac{1}{2}(\LL w, \M v)_Y + \frac{1}{2}(\M w, \LL v)_Y,\\
		b(u,v) &= (u,v)_V,
	\end{align}
\end{subequations}
it follows from~\cref{lemma:K_variational} that
\begin{subequations}
	\begin{align}
		\hat{a}(u,v) &= b(K^{-1}u,v),\quad \forall u,v\in V,\\
		\hat{a}(u_h,v_h) &= b(T_h u_h,v_h),\quad \forall u_h, v_h \in V_h.
	\end{align}
\end{subequations}
We note further that $b$ is coercive and for $0\neq u$, $\hat{a}(u,u) \geq \alpha \|u\|_V^2 > 0$.

Thus, with $\hat{a}$ and $b$ defined by~\cref{eq:cc_bilinear_forms}, the framework of~\cref{sec:numerical_spectrum} applies; cf.~\cref{eq:solution_op}.  If the family $(V_h)_h$ satisfies~\cref{eq:approximability}, we have shown that discrete norm convergence~\cref{eq:discrete_norm_convergence} holds, and convergence rates of numerical approximations to the coercivity constant can be established through~\cref{eq:gamma} and~\cref{eq:eig_convergence}.

Since $K^{-1}$ is self-adjoint (cf.~\cref{thm:K_symm}), we have $b(K^{-1}u,v) = (K^{-1}u,v)_V = (u,K^{-1}v)_V = b(u,K^{-1}v)$ and so by~\cref{eq:b_adjoint} and~\cref{eq:gamma}, we have
\begin{equation}
	\gamma_h^\dagger = \gamma_h = \sup_{\substack{u\in E \\ \|u\|_V=1}} \inf_{v_h \in V_h}\|u - v_h\|_V,
\end{equation}
where $E$ is the eigenspace corresponding to the coercivity constant.

We recall that for this eigenvalue, the ascent $\ell = 1$ and the algebraic and geometric multiplicities coincide.  Thus the right-hand sides of~\cref{eq:eig_convergence} are of the same order and we simply have
\begin{equation}\label{eq:max_eig_bound}
	\min_{i=1,\dots,m} |\alpha - \mu_i^{(h)}|\leq \max_{i=1,\dots,m} |\alpha - \mu_i^{(h)}| \leq c\gamma_h^2,
\end{equation}
where $\mu_1^{(h)}, \dots, \mu_m^{(h)}$ are the eigenvalues of $T_h$ converging to $\alpha$.
In addition, since the bilinear form $\langle Au, v\rangle_{V'\times V} = \frac{1}{2}(\LL u, \M v)_Y + \frac{1}{2}(\M u, \LL v)_Y$ is symmetric and coercive, the discrete eigenvalues satisfy a monotonicity property~\cite{boffi2010finite}:
\begin{equation}\label{eq:eig_monotonicity}
	\alpha \leq \mu_i^{(h)},\quad i=1,\dots,m.
\end{equation}
Thus, combining~\cref{eq:max_eig_bound} and~\cref{eq:eig_monotonicity}, with $\alpha_h := \min_{i}\mu_i^{(h)}$, we obtain
\begin{equation}\label{eq:eigenvalue_convergence}
	\alpha \leq \alpha_h \leq \alpha + c\sup_{\substack{u\in E \\ \|u\|_V=1}}\inf_{v_h \in V_h} \|u - v_h\|_V^2.
\end{equation}
Equation~\cref{eq:eigenvalue_convergence} shows that the coercivity constant is approximated from above, with convergence rate that is double the rate at which eigenfunctions are approximated by $V_h$.

\section{Applications and Derivations}\label{sec:apps}
We derive an expression for the operator $K = A^{-1}R_V$ for several differential equations; in particular, we establish a connection with a compact operator $C$ as in~\cref{eq:I_C}.

\subsection{Computational Aspects}

Evaluating the map $K$ in~\cref{eq:K_definition} does not require an explicit expression for the Riesz map $R_V$, the adjoints $\LL'$ and $\M'$, nor $A^{-1}$, which is an integral operator in the context of differential equations.  In fact, from~\cref{lemma:K_variational} if $w = Ku$, then $w$ is the solution of the variational problem~\cref{eq:variational_def_K}.

For the scalar differential equations considered, we use~\cref{eq:variational_def_K} to solve for $Ku - u = w - u$, and show that the map $u\mapsto w - u$ defines a compact operator from $V \to V$.  In~\cref{sec:ode-first-order}, compactness is shown by demonstrating that the range of this mapping is finite-dimensional (thus compact, see Section 2.8 of~\cite{atkinson2005theoretical}).  In~\cref{sec:ode_bvp} and~\cref{sec:example_2}, we demonstrate the range consists of sufficiently smooth functions, so that the compact embedding results of the Rellich-Kondrachov theorem can be applied (see Theorem II.1.9 in~\cite{braess2007finite}).

For systems of differential equations with unknowns $u_1, u_2, \dots, u_n$, the linearity of $K$ is exploited and we write
\begin{align}
\begin{split}
K\vu &= K[u_1,u_2,\dots,u_n]^T\\
&= K[u_1,0,\dots, 0]^T + K[0, u_2,\dots, 0]^T + \dots K[0, 0, \dots, u_n]^T.
\end{split}
\end{align}
In this case, $K[0,\dots,u_k,\dots, 0]^T = \vw_k$ is equivalent to the variational problem
\begin{equation}
	\frac{1}{2}(\LL \vw_k, \M \vv)_Y + \frac{1}{2}(\M \vw_k, \LL \vv)_Y = ([0,\dots,u_k,\dots,0]^T,\vv)_V,\quad \forall \vv\in V.
\end{equation}

Then the analysis proceeds by considering $K\vu - \vu$, and showing sufficient smoothness in order to apply the Rellich-Kondrachov theorem.

\subsection{Application: Least-Squares IVP}\label{sec:ode-first-order}

Consider the basic 1D model problem
\begin{subequations}\label{eq:example_1}
\begin{align}
u'(x) - u(x) &= f(x), \quad x\in(0,1)\\
u(0) &= 0,
\end{align}
\end{subequations}
for $f \in L^2(0,1)$.  The solution space is $V = \{ u\in H^1(0,1) \,:\, u(0) = 0\} $, and a least-squares variational equation takes the form: find $u \in V$ such that
\begin{equation}
	(u' - u, v' - v)_0 = (f,v' - v)_0\quad \forall v\in V,
\end{equation}
where $(\cdot, \cdot)_0$ denotes the $L^2$ inner product.  This corresponds to $X = Y = L^2(0,1)$, with operators of the form $\LL = \M = \frac{\dif}{\dif x} - I_X$, where $I_X$ is the identity operator on $X$ (restricted to the space $V$).
Since $\LL = \M$, the corresponding variational equation~\cref{eq:variational_def_K} for $w = Ku$ is: find $w \in V$ such that
\begin{equation}\label{eq:K_example_1}
	(w' - w, v' - v)_0 = (u,v)_0 + (u',v')_0,\quad \forall v \in V.
\end{equation}

Consider $v \in C^\infty_0(0,1) \subset V$. Then by definition of distributional derivatives we have that
\begin{equation}\label{eq:ex1_wu}
	(w,v)_0 - \langle w'', v\rangle = (u,v)_0 - \langle u'', v\rangle, \quad \forall v \in C^\infty_0(0,1),
\end{equation}
where the duality pairing $\langle\cdot,\cdot\rangle$ is between $C_0^\infty(0,1)$ and its dual.

By~\cref{eq:ex1_wu}, it follows that $(w - u)'' = w - u \in V \subset H^1(0,1)$.  It follows that the function $\phi \coloneqq Ku - u = w - u \in H^3(0,1) \cap V$.  That is, it satisfies the differential equation:
\begin{subequations}
\begin{align}
	\phi'' &= \phi \quad \text{in } (0,1),\\
	\phi(0) &= 0.
\end{align}
\end{subequations}
This has the general solution $\phi(x) = \beta \sinh(x)$ for some constant $\beta$.

To determine $\beta$, consider~\cref{eq:K_example_1} for any $v\in V$ with $v(1) \neq 0$.  Using the definition of $\phi = w - u$, we obtain
\begin{equation}
	(\phi,v)_0 + (\phi',v')_0 = (w',v)_0 + (w,v')_0.
\end{equation}
Performing integration by parts, and using the fact that $\phi - \phi'' = 0$, we obtain
\begin{equation}
	v(1)\phi'(1) = v(1)w(1) = v(1)\left(u(1) + \phi(1)\right).
\end{equation}
Thus, $\phi'(1) - \phi(1) = u(1)$, leading to $\beta = \me\,u(1)$.  Since $Ku = u + \phi$, we thus have
\begin{equation}
	Ku = u + \me\,u(1)\sinh(x), \quad \forall u \in V.
\end{equation}
The map $C: u\mapsto \me\,u(1)\sinh(x)$ has finite-dimensional range, namely $\spn\{\sinh\}$.  Thus, it is compact~\cite{kreyszig1978introductory},
 and $K = I + C$, where $C: V\to V$ is a compact operator, and $I: V\to V$ is the identity.

Next we take a closer look at the eigenvalues and eigenvectors of $K$.
Since $C$ is compact, its spectrum contains $0$, and an at-most countable set of eigenvalues~\cite{kreyszig1978introductory}.  Moreover, if $u \in H_0^1(0,1) \subset V$, we have $Cu = \me\,u(1)\sinh(x) = 0$ (since $u(1)=0$), establishing that $0$ is an eigenvalue of $C$ with infinite-dimensional eigenspace $H_0^1(0,1)$.

If $\lambda\neq 0$ is an eigenvalue of $C$, then
\begin{equation}
	\lambda u(x) = \me\,u(1)\sinh(x),
\end{equation}
for $u \neq 0$.  In particular, $u(1) \neq 0$, and $u \in \spn\{\sinh(x)\}$.  Solving for $\lambda$, we find that $\lambda = \me\sinh(1)$.  Thus, the set of eigenvalues for $K = I + C$ is $\{1, 1 + \me\sinh(1)\}$, and
\begin{subequations}
	\begin{align}
		Ku = u &\iff u \in H_0^1(0,1),\label{eq:ex1_eig1}\\
		Ku = (1 + \me\sinh(1))u &\iff u \in \spn\{\sinh(x)\}.\label{eq:ex1_eig2}
	\end{align}
\end{subequations}
Finally, the eigenvalues of $K^{-1}$ are $\sigma(K^{-1}) = \left\{1, \frac{1}{1 + \me\sinh(1)}\right\}$, and the coercivity constant satisfies
\begin{equation}
	\alpha = \inf_{0 \neq u\in V} \frac{(u' - u, u' - u)_0}{(u,u)_0 + (u', u')_0} = \frac{1}{1 + \me\sinh(1)}.
\end{equation}

\subsection{Application: Galerkin Formulation of Advection-Diffusion BVP}\label{sec:ode_bvp}

We next consider an ordinary differential equation that resembles diffusion and advection (in 1D):
\begin{subequations}
\begin{align}
  -u''(x) + u'(x) &= f(x), \quad x\in (0,1),\\
  u(0) = u(1) & = 0,
\end{align}
\end{subequations}
where $f \in L^2(0,1)$.  The solution space is thus $V = H_0^1(0,1)$, we define the following variational problem: find $u \in V$ such that
\begin{equation}
		(u',v + v')_0 = (f,v)_0, \quad \forall v\in H^1_0(0,1).
\end{equation}
Thus, the operators take the form $\LL = \frac{\dif}{\dif x}$, and $\M = I_X + \frac{\dif}{\dif x}$, with $X = Y = L^2(0,1)$ and $I_X: V\to X$ is the identity operator on $X$, restricted to $V$.

If $Ku = w$, then we have
\begin{equation}
	\left\langle R_V u, v\right \rangle_{V'\times V} = \frac{1}{2}\left\langle \M'\LL w,v\right\rangle_{V'\times V}  + \frac{1}{2}\left\langle \LL'\M w,v\right\rangle_{V'\times V} ,
\end{equation}
for all $v\in V$, which leads to
\begin{align}
\begin{split}
  (u,v)_0 + (u',v')_0 & = \frac{1}{2}(w', v + v')_0 + \frac{1}{2}(w + w', v')_0,\\
	                    & = (w',v')_0 + \frac{1}{2}(w',v)_0 + \frac{1}{2}(w,v')_0,
	                    \end{split}
\end{align}
for all $v\in V$.  Then, choosing $v \in C^\infty_0(0,1) \subset V$, we find that
\begin{equation}\label{eq:example_2_phi_ode}
-\phi'' = u,
\end{equation}
with $\phi \coloneqq Ku - u = w - u$.  Since $w,u\in H_0^1(0,1)$, it follows that $\phi \in H^3(0,1)\cap H_0^1(0,1)$.
By the Rellich-Kondrachov theorem (again, see Theorem II.1.9 in~\cite{braess2007finite}), $H^{k+1}(0,1)$ is compactly embedded in $H^{k}(0,1)$ for every non-negative integer $k$. Thus, for any bounded sequence $\{\varphi_n\} \subset H^3(0,1)\cap H_0^1(0,1) \subset H^3(0,1)$, there is a subsequence $n_k$ and $\varphi \in H^2(0,1)$ such that $\varphi_{n_k} \to \varphi$ in $H^2(0,1)$.  Since $\|\varphi_{n_k} - \varphi\|_{H^1} \leq \|\varphi_{n_k} - \varphi\|_{H^2}$, it also follows that $\varphi_{n_k}$ converges to $\varphi$ in $H^1(0,1)$.  Since $H_0^1(0,1)$ is a closed subspace, $\varphi_{n_k} \in H_0^1(0,1)$ implies that $\varphi\in H_0^1(0,1)$.  Thus, $H^3(0,1)\cap H_0^1(0,1)$ is compactly embedded in $V = H_0^1(0,1)$.

The map $u \mapsto Cu =  \phi$ induced by~\cref{eq:example_2_phi_ode} thus maps $V$ into $H^3(0,1)\cap H_0^1(0,1)$, a compactly embedded subspace, and is thus a compact operator.  More precisely, $\phi = w - u = Cu$, where $C$ is the compact solution operator of the ODE~\cref{eq:example_2_phi_ode}.
As a result, $Ku = u + \phi = u + Cu = (I + C)u$.

\subsection{Application: Advection-Diffusion-Reaction PDE}\label{sec:example_2}

We extend the previous example to multiple dimensions with a reaction term:
\begin{align}\label{eq:pde-advec-diff}
\begin{split}
	-\Delta u + \vb\cdot\nabla u + u &= f, \quad \vx \in \Omega,\\
	u &= 0, \quad \vx \in \partial\Omega,
	\end{split}
\end{align}
where $\vb(\vx)$ is a smooth vector field with $\nabla\cdot\vb \leq 2$, and $\Omega$ is a bounded open subset of $\mathbb{R}^n$ that satisfies a cone condition with Lipschitz continuous boundary $\partial\Omega$ (see~\cites{braess2007finite,gilbarg2015elliptic, grisvard2011elliptic}).

In this case the variational equation
\begin{equation}\label{eq:bilinear_form_example_3}
	a(u,v) \coloneqq (\nabla u,\nabla v)_0 + (\vb\cdot\nabla u, v)_0 + (u,v)_0 = (f,v)_0, \quad \forall v\in H_0^1(\Omega),
\end{equation}
yields a coercive bilinear form when $\vb$ satisfies the properties above.

To associate operators with the weak form above, consider
\begin{equation}
	\LL \coloneqq \begin{pmatrix}
		\nabla \\
		I_X + \vb\cdot \nabla
	\end{pmatrix},
\end{equation}
with $I_X: H_0^1(\Omega) \to L^2(\Omega)$ the restriction of the identity on $L^2(\Omega)$; $\LL$ is a mapping from $H_0^1(\Omega) \to \left[L^2(\Omega)\right]^n \times L^2(\Omega)$.
Similarly the operator
\begin{equation}
	\M \coloneqq \begin{pmatrix}
		\nabla \\
		I_X
	\end{pmatrix}
\end{equation}
maps $H_0^1(\Omega) \to \left[L^2(\Omega)\right]^n \times L^2(\Omega)$.  Thus, the pivot spaces for this problem are $X = L^2(\Omega)$ and $Y = \left[L^2(\Omega)\right]^n \times L^2(\Omega)$.  With these operators, the bilinear form in~\cref{eq:bilinear_form_example_3} is written as
\begin{equation}
  a(u,v) = (\LL u, \M v)_0.
\end{equation}
Here we use the same notation in the inner product for the $L^2$ product space.

If $Ku = w$, then
\begin{equation}
	(u,v)_V = \frac{1}{2}\left(\LL w,\M v\right)_0 + \frac{1}{2}\left(\M w,\LL v\right)_0,
\end{equation}
for all $v\in V$.  From this, we see that
\begin{equation}\label{eq:K_example_3}
 (u,v)_0 + (\nabla u,\nabla v)_0 = (w,v)_0 + (\nabla w,\nabla v)_0
	+ \frac{1}{2}(\vb\cdot\nabla w, v)_0 + \frac{1}{2}(w, \vb\cdot\nabla v)_0,
\end{equation}
for all $v\in V$.
Choosing $v\in C_0^\infty(\Omega)$, integration by parts shows that for $\phi = w - u$,
\begin{equation}
	-\Delta \phi + \phi = \frac{1}{2}(\nabla\cdot\vb)w.
\end{equation}
Adding the term $\frac{1}{2}(\nabla\cdot\vb)u$ to both sides, we obtain:
\begin{equation}\label{eq:example_3_phi_pde}
	-\Delta \phi + (1 - \frac{1}{2}\nabla\cdot\vb)\phi = \frac{1}{2}(\nabla\cdot\vb)u.
\end{equation}
Since $\nabla\cdot\vb \leq 2$, the coefficient $1 - \frac{1}{2}\nabla\cdot\vb \geq 0$, and~\cref{eq:example_3_phi_pde} corresponds to an elliptic PDE\@.  With our assumptions on the domain $\Omega$, $\phi \in H^3(\Omega) \cap V$, and the Rellich-Kondrachov theorem applies.  As in~\cref{sec:example_2}, the compact embedding of $H^3(\Omega)$ into $H^2(\Omega)$ and the fact that $H_0^1(\Omega)$ is a closed subspace, shows that $Ku = u + \phi$ is of the form $I + C$, with $C$ a compact operator.

\subsection{Application: Poisson Equation, least-squares formulation}\label{sec:poisson_ls}

We next consider a slightly easier PDE, but consider
the first-order reformulation. An equivalent first-order system to the Poisson equation with homogeneous Dirichlet boundary conditions is given by
\begin{subequations}\label{eq:ls-poisson}
\begin{align}
	\vq + \nabla u &= 0, \quad \vx \in \Omega,\\
	\nabla\cdot\vq &= f, \quad \vx \in \Omega,\\
	u &= 0, \quad \vx \in \partial\Omega,
\end{align}
\end{subequations}
and we consider the following operator:
\begin{equation}
	\LL = \begin{pmatrix}
		I_X & \nabla \\
		\nabla\cdot & 0
	\end{pmatrix},
\end{equation}
with $I_X: H(\text{div}) \to \left[L^2(\Omega)\right]^n$ the restriction of the identity operator on $\left[L^2(\Omega)\right]^n$.  The operator $\LL$ maps $V \coloneqq H(\text{div})\times H_0^1 \to \left[L^2(\Omega)\right]^n \times L^2(\Omega)$, with corresponding pivot spaces $X = Y = \left[L^2(\Omega)\right]^n \times L^2(\Omega)$.

A least-squares finite element formulation leads to a symmetric bilinear
form~---~i.e., $\M = \LL$.  Thus, the operator $A = \LL'\LL$.

Notationally, we consider an arbitrary element of $V = H(\text{div})\times H_0^1$ to be $[\vr, v]^T$.
If $K[\vq, u]^T = [\vf, w]^T$ then
\begin{equation}\label{eq:K_example_4}
	\left(\begin{bmatrix} \vq \\ u\end{bmatrix},\begin{bmatrix} \vr \\ v\end{bmatrix}\right)_V = \left(\LL \begin{bmatrix} \vf \\ w\end{bmatrix},\LL \begin{bmatrix} \vr \\ v\end{bmatrix}\right)_0, \quad \forall \begin{bmatrix} \vr \\ v\end{bmatrix}\in V,
\end{equation}
which gives the two equations
\begin{subequations}\label{eq:poisson_A_eq}
\begin{align}
  (\vq, \vr)_0 + (\nabla\cdot \vq, \nabla \cdot\vr)_0 &= (\vf + \nabla w, \vr)_0 + (\nabla\cdot \vf, \nabla \cdot\vr)_0\quad\forall\vr \in H(\text{div})\\
  (u, v)_0 + (\nabla u, \nabla v)_0 &= (\vf + \nabla w, \nabla v)_0\quad \forall v \in H_0^1(\Omega).
\end{align}
\end{subequations}
Since $K$ is a linear map, we can consider the cases $\vq = 0$ and $u = 0$ separately and take the superposition $K[\vq, u]^T = K[0, u]^T + K[\vq, 0]^T$.

\medskip
\subsubsection*{Case I\@. $\vq = 0$}~\\
Problem~\cref{eq:poisson_A_eq} becomes
\begin{subequations}\label{eq:zero_q_hdiv}
\begin{align}
	(\vf + \nabla w, \vr)_0 + (\nabla\cdot \vf, \nabla \cdot\vr)_0 = 0\quad\forall\vr \in H(\text{div})&\\\label{eq:zero_q_h1}
(\vf + \nabla w, \nabla v)_0 = (u, v)_0 + (\nabla u, \nabla v)_0\quad \forall v \in H_0^1(\Omega).
\end{align}
\end{subequations}
As a result, choosing $\vr \in \left[C_0^\infty\right]^n$
in~\cref{eq:zero_q_hdiv} shows that $\nabla\nabla\cdot\vf = \vf + \nabla w \in L^2$,
so that $\nabla\cdot \vf \in H^1$. At the same time, choosing $\vr\in H(\text{div})$ with non-vanishing trace leads us to conclude that $\nabla\cdot\vf$ vanishes on the boundary.
Inserting $\nabla\nabla\cdot\vf = \vf + \nabla w$ into~\cref{eq:zero_q_h1} and choosing $v\in C_0^\infty$, we obtain
\begin{equation}\label{eq:div_f}
	-\Delta(\nabla\cdot\vf) = u - \Delta u \implies \nabla\cdot\vf = u + (-\nabla)^{-1}u.
\end{equation}

Continuing, since $\nabla w = \nabla\nabla\cdot\vf - \vf$, it follows that
\begin{equation}\label{eq:grad_g}
	\nabla w = \nabla u + \nabla(-\Delta)^{-1}u - \vf.
\end{equation}
Taking the divergence of \cref{eq:grad_g} and using \cref{eq:div_f}, we obtain
\begin{align}
\begin{split}
	-\Delta w &= -\Delta u + (-\Delta)(-\Delta)^{-1}u + \nabla\cdot\vf \\
	&= 2u - \Delta u + (-\Delta)^{-1}u.
	\end{split}
\end{align}
Since $w\in H_0^1$, applying the inverse Laplacian leads to
\begin{equation}\label{ex:g_zero_q}
w = u + 2(-\Delta)^{-1}u + (-\Delta)^{-2}u.
\end{equation}
Finally, using the expressions for $w$, $\nabla\cdot \vf$, and $\vf + \nabla w = \nabla\nabla\cdot \vf$, we obtain
\begin{equation}\label{ex:f_zero_q}
	\vf = -\nabla(-\Delta)^{-1}u - \nabla(-\Delta)^{-2}u.
\end{equation}

\medskip
\subsubsection*{Case II\@. $u=0$}~\\
In this scenario, problem~\cref{eq:poisson_A_eq} becomes:
\begin{subequations}\label{eq:zero_u_hdiv}
\begin{align}
  (\vf + \nabla w, \vr)_0 + (\nabla\cdot \vf, \nabla \cdot\vr)_0 &= (\vq, \vr)_0 + (\nabla\cdot \vq, \nabla \cdot\vr)_0\quad\forall\vr \in H(\text{div})\\\label{eq:zero_u_h1}
  (\vf + \nabla w, \nabla v)_0 &= 0\quad \forall v \in H_0^1(\Omega).
\end{align}
\end{subequations}
Then, choosing $v\in C_0^\infty$ in \cref{eq:zero_u_h1} yields
\begin{equation}\label{eq:g_zero_u}
	-\Delta w = \nabla \cdot \vf \implies w = (-\Delta)^{-1}\nabla\cdot\vf,
\end{equation}
so that $\nabla w = \nabla (-\Delta)^{-1}\nabla\cdot\vf$.  Letting $\vr \in \left[C_0^\infty\right]^n$ in~\cref{eq:zero_u_hdiv}, we see that
\begin{equation}\label{eq:div_f_minus_q}
\nabla\nabla\cdot (\vf - \vq)= \vf - \vq	 + \nabla (-\Delta)^{-1}\nabla\cdot\vf \in L^2(\Omega).
\end{equation}
Thus, $\nabla \cdot(\vf - \vq) \in H^1$.  Repeating the computation with $\vr \in H(\text{div})$ with non-vanishing trace, it follows that $\nabla \cdot(\vf - \vq)$ vanishes on the boundary.  Thus, $(-\Delta)^{-1}(-\Delta)\nabla\cdot (\vf - \vq) = \nabla\cdot (\vf - \vq)$.  Applying this after taking the divergence of~\cref{eq:div_f_minus_q} leads to
\begin{equation}\label{eq:div_f_zero_u}
	\nabla\cdot\vf = \nabla\cdot \vq + (-\Delta)^{-1}\nabla\cdot\vq.
\end{equation}
From this and \cref{eq:g_zero_u}, we obtain
\begin{equation}
	w = (-\Delta)^{-1}\nabla\cdot\vq + (-\Delta)^{-2}\nabla\cdot\vq.
\end{equation}
Finally, combining \cref{eq:div_f_minus_q,eq:div_f_zero_u} leads to
\begin{equation}
	\vf = \vq - \nabla(-\Delta)^{-2}\nabla\cdot \vq.
\end{equation}\par

Using these two cases~---~for $\vq = 0$ and $u = 0$~---~it follows that
\begin{align}\label{eq:K_standard}
\begin{split}
K[\vq, u]^T &= \begin{pmatrix}
 \vq - \nabla(-\Delta)^{-2}\nabla\cdot \vq - \nabla(-\Delta)^{-1}u - \nabla(-\Delta)^{-2}u\\
 u + 2(-\Delta)^{-1}u + (-\Delta)^{-2}u + (-\Delta)^{-1}\nabla\cdot\vq + (-\Delta)^{-2}\nabla\cdot\vq
 \end{pmatrix}\\
 &= \begin{bmatrix}
 	\vq\\u
 \end{bmatrix} + \begin{pmatrix}
 	-\nabla (-\Delta)^{-2}\nabla\cdot & -\nabla (-\Delta)^{-1} - \nabla (-\Delta)^{-2}\\
 	(-\Delta)^{-1}\nabla\cdot + (-\Delta)^{-2}\nabla\cdot & 2(-\Delta)^{-1} + (-\Delta)^{-2}
 \end{pmatrix}\begin{bmatrix}
 	\vq\\u
 \end{bmatrix}\\
 &= (I + C)[\vq, u]^T.
 \end{split}
\end{align}

Examining the components of $C$, we observe that the ranges are:
\begin{subequations}\label{eq:range_C}
\begin{align}
	\mathcal{R}(C_{11}) \subset \left[H^3\right]^n,\\
	\mathcal{R}(C_{12}) \subset \left[H^2\right]^n,\\
	\mathcal{R}(C_{21}) \subset H^2\cap H_0^1,\\
	\mathcal{R}(C_{22}) \subset H^3\cap H_0^1.
\end{align}
\end{subequations}

Another application of the Rellich-Kondrachov theorem shows that the operator $C$ is compact.  Indeed, $\left[H^3\right]^n$ and $\left[H^2\right]^n$ are compactly embedded in $\left[H^1\right]^n$, and $\|\cdot\|_{H(\text{div})} \leq C\|\cdot\|_{\left[H^1\right]^n}$ so the ranges of $C_{11}$ and $C_{12}$ are compactly embedded in $H(\text{div})$.  The compact embedding of the ranges of $C_{21}$ and $C_{22}$ follow as in the scalar case.

\subsection{Application: Poisson Equation, rescaled least-squares formulation}\label{sec:poisson-rescaled}

The first-order reformulation of the Poisson equation that we considered in the previous section is not unique.  There, we introduce the variable $\vq \coloneqq -\nabla u$.  If instead we introduce $\vq \coloneqq -2\nabla v$, the first-order system becomes
\begin{subequations}\label{eq:ls-poisson-scaled}
\begin{align}
	\vq + 2\nabla u &= 0, \quad \vx \in \Omega,\\
	\nabla\cdot\vq &= 2f, \quad \vx \in \Omega,\\
	u &= 0, \quad \vx \in \partial\Omega.
\end{align}
\end{subequations}
From this, we consider the operator
\begin{equation}
	\LL = \begin{pmatrix}
		I_X & 2\nabla \\
		\nabla\cdot & 0
	\end{pmatrix},
\end{equation}
with $V = H(\text{div})\times H_0^1$ and the same pivot spaces as in~\cref{sec:poisson_ls}.

Following a similar derivation, we arrive at $K$ of the form
\begin{align}\label{eq:D_C}
\begin{split}
	K(\vq, u)&= \begin{pmatrix}
		\vq \\ \frac{1}{4}u
	\end{pmatrix}\\
           &+ \begin{pmatrix}
 	-\nabla (-\Delta)^{-2}\nabla\cdot & -\frac{1}{2}\nabla (-\Delta)^{-1} - \frac{1}{2}\nabla (-\Delta)^{-2}\\
 	\frac{1}{2}(-\Delta)^{-1}\nabla\cdot + \frac{1}{2}(-\Delta)^{-2}\nabla\cdot & \frac{1}{2}(-\Delta)^{-1} + \frac{1}{4}(-\Delta)^{-2}
 \end{pmatrix}\begin{pmatrix}
 	\vq\\u
 \end{pmatrix}\\
 &= (D + C)(\vq, u),
	\end{split}
\end{align}
where $D = \text{diag}(1, \frac{1}{4})$.
The presence of the diagonal operator precludes the simple characterization of the spectrum as in~\cref{eq:series_K}.  However, as shown in~\cref{sec:ls_poisson}, the convergence of the discrete coercivity does not deteriorate; indeed it behaves as in~\cref{eq:eigenvalue_convergence}.

A heuristic explanation is as follows.  Multiplying~\cref{eq:D_C} on the left by $D^{-1}$ results in
\begin{equation}
KD^{-1} = I + CD^{-1} = I + \widetilde{C}.
\end{equation}
Since $D^{-1}$ is bounded and $C$ is compact, it follows that their product $\widetilde{C} = CD^{-1}$ is also a compact operator.  As a result, the form of~\cref{eq:I_C} is recovered for the operator $KD^{-1}$.

To interpret the operator $KD^{-1}$ in terms of the inner product and bilinear form on $V$, we return to the definition in~\cref{eq:K_definition}.  From this we have
\begin{equation}
	KD^{-1} = A^{-1} R_V D^{-1}.
\end{equation}

The operator $R_V D^{-1}$ constitutes a rescaling of the inner-product and norm of the space $V$, as in
\begin{multline}
	\left\langle R_V D^{-1}(\vq, u), (\vr, v)\right\rangle_{V'\times V} = (D^{-1}(\vq,u), (\vr, v))_V\\
	= (\vq, \vr)_0 + (\nabla\cdot\vq, \nabla\cdot\vr)_0 + 4(u,v)_0 + 4(\nabla u, \nabla v)_0,
\end{multline}
which is an equivalent inner product, and thus leads to an equivalent norm.  The operator $\widetilde{C}$ is symmetric with respect to this new inner product, and so the convergence of discrete eigenvalues are once again governed by the theory in~\cites{descloux1978spectrala,descloux1978spectralb}.  By norm equivalence, we would expect the same order of convergence for the original problem $K = D + C$.

Rigorously justifying this argument necessitates a careful analysis of the spectrum and eigenspaces of the operators $K$ and $KD^{-1}$.  More generally, we can expect the operator $K$ to take the form $K = M + C$, where $M$ is a non-diagonal, invertible operator.  This may be the case, for example, when solving a diffusion equation in an anisotropic medium.  Establishing a relationship between the spectrum of $C$ and $K$ in this case is unresolved, and is the subject of future work.

\section{Numerical Results}\label{sec:numerics}

In~\cref{sec:apps} several relationships for coercivity constants were derived for
various differential equations.  Next, we revisit several of these cases and highlight
the numerical accuracy of the discrete coercivity constant.

\subsection{Least-Squares Initial Value Problem}

In \cref{sec:ode-first-order} the least-squares variational formulation of an
initial value problem~\cref{eq:example_1} is shown to have a coercivity
constant satisfying
\begin{equation}
  \alpha = \inf_{0 \neq u\in V} \frac{(u' - u, u' - u)_0}{(u,u)_0 +
  (u', u')_0} = \frac{1}{1 + e\sinh(1)} = 1 - \tanh(1),
\end{equation}
with corresponding eigenspace of a single function,
\begin{equation}
  \spn\{\sinh(x)\}\subset V = \left\{ v\in H^1(0,1) : v(0) = 0 \right\}.
\end{equation}

In \cref{fig:ode-eig-linear}, the convergence of the discrete coercivity
constant to the coercivity constant of the continuous problem is shown as a
function of the mesh size $h$. The results show both linear and quadratic polynomial
elements, and we
see the convergence rate of the coercivity constant is
twice the rate of convergence for approximation of general functions in $H^1 \supset V$,
resulting in $\mathcal{O}(h^2)$ and $\mathcal{O}(h^4)$ for linears and quadratics, as expected (cf.,~\cite{boffi2010finite}).
\begin{figure}[!ht]
	\centering
	\includegraphics{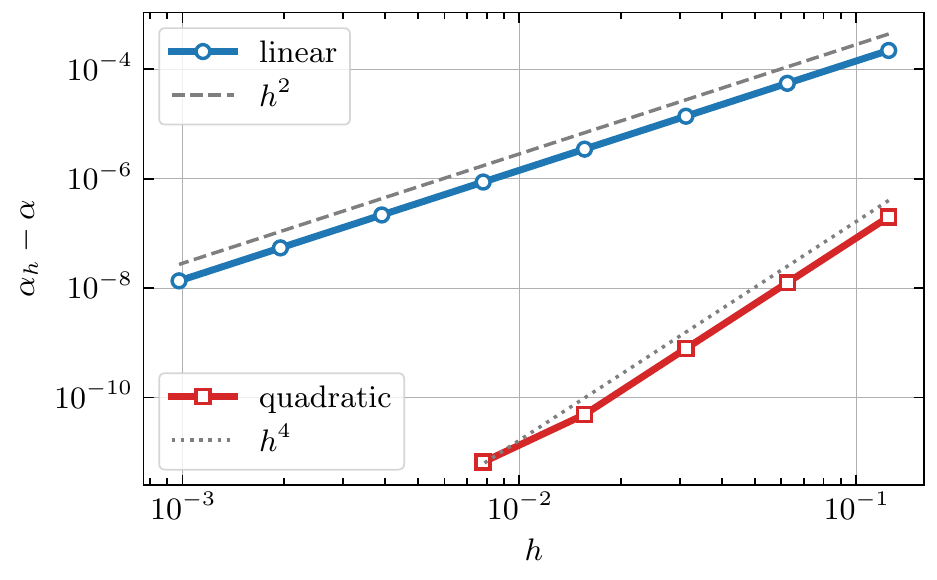}
	\caption{Convergence of discrete coercivity constant for ODE-IVP~\cref{eq:example_1} using linear and quadratic polynomials.}\label{fig:ode-eig-linear}
\end{figure}

\subsection{Advection-Diffusion-Reaction BVP}

Next, we consider the advection-diffusion-reaction PDE
in~\cref{eq:pde-advec-diff} with domain $\Omega = (0,1)^2$ and vector field
$\vb(\vx) = \left[ \frac{x}{2}, \frac{y}{2} \right]$.

Recall that the function $\phi := Ku - u = Cu \in H_0^1(\Omega)$ is characterized by~\cref{eq:example_3_phi_pde}.  With our choice of $\vb(\vx)$, this simplifies to
\begin{equation}
	-\Delta \phi + \frac{1}{2}\phi = \frac{1}{2}u.
\end{equation}

This PDE has a unique solution in $H^3(\Omega) \cap H_0^1(\Omega)$, and we can write
\begin{equation}
	Cu = \phi = \frac{1}{2}\left(-\Delta + \frac{1}{2}I \right)^{-1}u.
\end{equation}

Thus, if $\lambda$ is an eigenvalue of $C$, it follows that there is a $0\neq u \in H_0^1(\Omega)$ such that
\begin{subequations}
\begin{align}
&\frac{1}{2}\left(-\Delta + \frac{1}{2}I \right)^{-1}u = \lambda u,\\
&\frac{1}{2}u = \lambda \left(-\Delta + \frac{1}{2}I \right)u.
\end{align}
\end{subequations}
It follows that $\lambda \neq 0$, and
\begin{equation}\label{eq:advec-diff-eigen}
	-\Delta u = \left(\frac{1 - \lambda}{2\lambda} \right)u.
\end{equation}
For $m,n \in \mathbb{N}$, let $\beta_{mn} = (m^2 + n^2)\pi^2$ be the eigenvalues of the Laplace operator on the unit square.  From~\cref{eq:advec-diff-eigen}, we see that the eigenvalues of $C$ are $\lambda_{mn} = (1 + 2\beta_{mn})^{-1}$, and thus the eigenvalues of $K$ are of the form
\begin{equation}
	1 + \lambda_{mn} = \frac{2 + 2\beta_{mn}}{1 + 2\beta_{mn}},
\end{equation}
 and have the corresponding eigenspaces
\[
  \spn\left\{\sin(\pi x)\sin (\pi y) \right \}.
\]
For the unit square $\Omega$ and for this choice of $\vb(\vx)$, the coercivity constant is thus
\begin{equation}
	\alpha = \inf_{0 \neq u\in V} \frac{(\nabla u, \nabla u)_0 + (u,u)_0 + (\vb \cdot \nabla u, u)_0 }{(u,u)_0 + (\nabla u, \nabla u)_0}
	 = \frac{1 + 2\beta_{11}}{2 + 2\beta_{11}} = \frac{1 + 4\pi^2}{2 + 4\pi^2},
\end{equation}
with an identical eigenspace to the Laplacian.

In~\cref{fig:advec-diff-eig-linear}, we show the convergence of the coercivity constant using piecewise linear and quadratic polynomials. As with the previous example, we observe twice the rate of convergence as for function approximation, as expected.
\begin{figure}[!ht]
	\centering
	\includegraphics{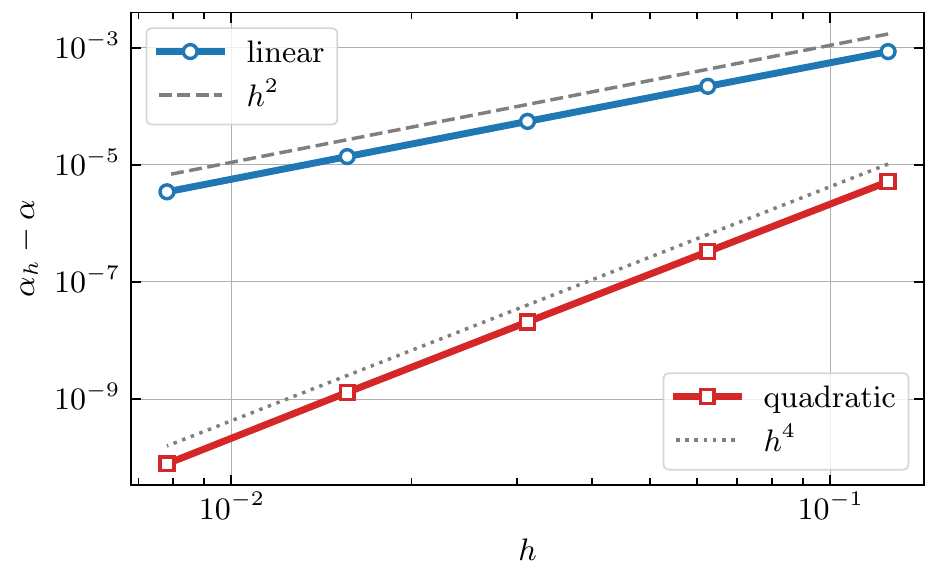}
	\caption{Convergence of discrete coercivity constant for the advection-diffusion BVP~\cref{eq:pde-advec-diff} using linear and quadratic polynomials.}\label{fig:advec-diff-eig-linear}
\end{figure}

\subsection{Least-squares Poisson}\label{sec:ls_poisson}

As a final numerical example, we consider the two least-squares formulations of
the Poisson equation~\cref{eq:ls-poisson,eq:ls-poisson-scaled} on the unit square.  See Appendix B for the derivation of the exact values of the coercivity constants.

Letting $\beta = 2\pi^2$ be the smallest eigenvalue of the Laplacian, the coercivity constant for the standard formulation~\cref{eq:ls-poisson} is
\begin{equation}\label{eq:coercivity_standard}
	\alpha = \frac{1 + 2\beta - \sqrt{1 + 4\beta}}{2(1 + \beta)} \approx 0.7603,
\end{equation}
and the coercivity constant of the rescaled formulation~\cref{eq:ls-poisson-scaled} is
\begin{equation}\label{eq:coercivity_rescaled}
\tilde{\alpha} = \frac{1 + 5\beta - \sqrt{(1 + \beta)(1 + 9\beta)}}{2(1 + \beta)}\approx 0.936.
\end{equation}
In either case, the corresponding eigenspace for the scalar variable $u$ is identical to the eigenspace of the Laplacian:
\begin{equation}
\spn\{\sin(\pi x)\sin(\pi y)\}.
\end{equation}
The ``eigenflux'' vector for the standard formulation
and for the rescaled formulation belong to the spaces
\begin{equation}
\spn\left\{\frac{-2}{1 + \sqrt{1 + 4\beta}}\nabla u\right\}
\quad\text{and}\quad
\spn\left\{\frac{2\tilde{\alpha}}{(1 + \beta)(1 - \tilde{\alpha})}\nabla u\right\}.
\end{equation}

For both problems, we approximate the scalar $u$ using piecewise linear finite
elements, and approximate the vector $\vq$ using lowest-order Raviart-Thomas
elements.
\Cref{fig:ls-poisson-eig} shows the convergence of the discrete coercivity
constant for both cases, highlighting converge as $\mathcal{O}(h^2)$.
We observe no qualitative difference between the different scalings
in terms of convergence behavior.  This indicates that an operator of the form
$K = D + C$ poses no additional numerical difficulties as compared to $K = I +
C$.
\begin{figure}[!ht]
	\centering
	\includegraphics{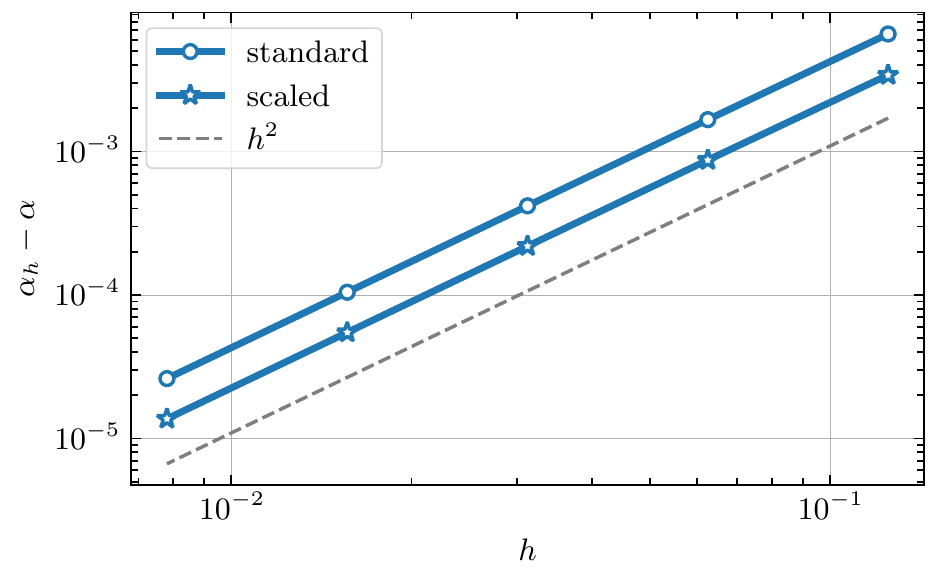}
  \caption{Convergence of discrete coercivity constant for the least-squares Poisson formulation corresponding to~\cref{eq:ls-poisson} and~\cref{eq:ls-poisson-scaled} using linear polynomials.}\label{fig:ls-poisson-eig}
\end{figure}

\section{Conclusion}\label{sec:conclusion}
The results shown in this paper quantify the convergence rates of discrete approximations to coercivity constants for a variety of variationally-posed differential equations.  The key observation leading to these results is that the coercivity constant is an eigenvalue of an operator $K = I + C$, where $C$ is compact.  Several numerical results illustrate the convergence theory.

Error bounds constructed for reduced basis methods rely on a lower-bound of the coercivity constant; for problems without a known lower bound for the constant of the continuous problem, convergence of discrete approximations are required in order to bound the error with respect to an analytical solution.  Thus, the results in this paper are particularly relevant to the method proposed in~\cite{chaudhry2020leastsquares}, which develops a reduced basis method with analytical error estimate.

There are numerous directions for future work.  Direct application of discrete coercivity constant convergence within a reduced basis method is a logical next step.  Investigation of the conditions for which an abstract variational problem leads to an operator $K = I + C$ is needed to establish the scope of the theory.  The rescaled differential equation in~\Cref{sec:poisson-rescaled} leads to an expression for $K$ with a diagonal operator, yet exhibits the same order of convergence as the originally scaled problem.  A heuristic explanation was provided; rigorously examining this case, and other problems leading to more general operators will expand the application of the theory to more general differential equations.  Finally, many variational problems of interest are not coercive, but inf-sup stable.  An extension to the approximation of the inf-sup constant will broaden the applicability of the convergence theory.

\section{Appendix A\@: The Choice of Adjoint}\label{sec:appendixA}

In~\cref{sec:func_analysis}, we discussed two definitions of adjoints for bounded linear operators;~\cref{eq:banach_adjoint} or~\cref{eq:banach_adjoint_pivot} are valid for bounded linear operators on normed spaces, and the Hilbert-adjoint is defined for bounded linear operators on Hilbert spaces,~\cref{eq:hilbert_adjoint}.  The adjoint defined in~\cref{eq:banach_adjoint} was used in~\cref{eq:definition_A} to define $A$, which allowed us to characterize the coercivity constant as a spectral value.

There is a third notion of adjoint for unbounded densely-defined linear operators on Hilbert spaces~\cites{arnold2018finite, conway2019course,kreyszig1978introductory}.  Let $\LL: \mathcal{D}(\LL) \subset W_1 \to W_2$ be a linear operator; here, $W_1$ and $W_2$ are Hilbert spaces, and $\mathcal{D}(\LL)$ is the domain of $\LL$.  If $\mathcal{D}(\LL)$ is dense in $W_1$, then there is a linear operator $\LL^\star:\mathcal{D}(\LL^\star) \subset W_2 \to W_1$ such that
\begin{equation}\label{eq:dense_adjoint}
	(\LL w_1, w_2)_{W_2} = (w_1, \LL^\star w_2)_{W_1}\quad \forall w_1\in \mathcal{D}(\LL),\ \forall w_2\in \mathcal{D}(\LL^\star ).
\end{equation}

For example, if $W_1 = W_2 = L^2(\Omega)$, with $\Omega$ an open bounded set, let $\LL$ be a linear differential operator with densely-defined domain satisfying $H_0^1(\Omega) \subset \mathcal{D}(\LL) \subset H^1(\Omega)$.  Then~\cref{eq:dense_adjoint} expresses an integration by parts identity with $H_0^1(\Omega) \subset \mathcal{D}(\LL^\star ) \subset H^1(\Omega)$.  In the context of linear differential equations, this adjoint is often known as the \emph{formal adjoint}~\cites{aubin2007approximation, cai2001first, evans2010partial}.

Because of its connection with differential equations, it may seem reasonable to alter the definition~\cref{eq:definition_A} using formal adjoints:
\begin{equation}\label{eq:definition_A_formal}
	\hat{A} = \frac{1}{2}\left(\M^\star \LL + \LL^\star \M \right).
\end{equation}
Unfortunately, this formulation does \textit{not} generally lead to a correct characterization of the coercivity constant.  We demonstrate this with the example in~\cref{eq:example_1}.

For this problem, $W_1 = W_2 = L^2(0,1)$, $\LL = \M = \frac{\dif }{\dif x} - I_X$ and the densely-defined domain is $\mathcal{D}(\LL) = V = \{ u\in H^1(0,1) : u(0) = 0\}$.

For $u,v \in V$, integration by parts shows that
\begin{equation}
	(\LL u, v)_0 = (u' - u, v)_0 = u(1)v(1) + (u,-v' - v)_0.
\end{equation}
Therefore, if $v\in \mathcal{D}(\LL^\star )$, it must hold that $v(1) = 0$.  In this case, we have $\LL^\star  = -\left(\frac{\dif }{\dif x} + I_X\right)$, with domain $\mathcal{D}(\LL^\star ) = V^\star  = \{ v\in H^1(0,1) : v(1) = 0\}$.

In order for $\hat{A}u = \LL^\star \LL u$ to be defined, we must have $u\in \mathcal{D}(\LL) = V$ and $\LL u \in \mathcal{D}(\LL^\star ) = V^\star $.  That is,
\begin{equation}\label{eq:domain_A_hat}
	\mathcal{D}(\hat{A}) = \left\{ u\in H^2(0,1) : u(0) = 0,\ u'(1) - u(1) = 0 \right\} \subsetneq V.
\end{equation}
The domain of $\hat{A}$ is now a proper subspace of $V$, in contrast to $A$ from~\cref{eq:definition_A}, which is defined on all of $V$.  We demonstrate that even if the operators $\hat{K} = \hat{A}^{-1}R_V$ and $\hat{K}^{-1}$ are well-defined on some subspace of $V$, the domain restriction prevents us from characterizing the coercivity constant through the spectrum of either operator.

For any $u \in \mathcal{D}(\hat{A})$, $\hat{A}u = \LL^\star \LL u = -\frac{\dif^2}{\dif x^2} + I_X$.  Thus, if $\hat{K}u = \lambda u \iff R_V u = \lambda\LL^\star \LL u$, it must hold that $u$ belongs to the subspace~\cref{eq:domain_A_hat} and
\begin{equation}
	\lambda (u - u'', v)_0 = (u,v)_0 + (u',v')_0,\quad \forall v\in V.
\end{equation}
An integration by parts shows that
\begin{subequations}\label{eq:ode_formal_adj}
	\begin{align}
		u - u'' &= \lambda(u - u''),\\
		u'(1) &= 0,\label{eq:bc_1}
	\end{align}
\end{subequations}
and since $u\in \mathcal{D}(\hat{A})$, two additional boundary conditions must be satisfied:
\begin{subequations}\label{eq:bc_formal_adj}
	\begin{align}
		u(0) &= 0,\\
		u'(1) - u(1) &= 0.\label{eq:bc_2}
	\end{align}
\end{subequations}
Note that~\cref{eq:bc_1} and~\cref{eq:bc_2} imply $u(1) = 0$.

If $\lambda = 1$, then~\cref{eq:ode_formal_adj} and~\cref{eq:bc_formal_adj} are satisfied for any $u\in H^2(0,1)$ with $0 = u(0) = u(1) = u'(1)$.  Thus, $\lambda = 1$ is an eigenvalue of $\hat{K}$.  Comparing with~\cref{eq:ex1_eig1}, observe that one eigenvalue has been correctly identified, although only a subspace of the eigenvectors are identified.

If $\lambda \neq 1$, then in order for~\cref{eq:ode_formal_adj} and~\cref{eq:bc_formal_adj} to hold, we must have
\begin{subequations}
	\begin{align}
		u - u'' &= 0,\\
		u(0) = u(1) = u'(1) &= 0,
	\end{align}
\end{subequations}
for which the only solution is $u(x) = 0$.  Thus, we cannot identify any non-unit eigenvalues of $\hat{K}$, and the eigenvalue and eigenspace of~\cref{eq:ex1_eig2} cannot be identified.  This is precisely the eigenvalue corresponding to the coercivity constant.

If $\hat{K}^{-1}u = \lambda u \iff \LL^\star\LL u = \lambda R_V$, we must have $u\in \mathcal{D}(\hat{A})$, and
\begin{equation}
	(u - u'', v)_0 = \lambda(u,v)_0 + \lambda(u',v')_0,\quad \forall v \in V.
\end{equation}
Integrating by parts and accounting for the boundary conditions in~\cref{eq:domain_A_hat}, we are led to the differential equation
\begin{subequations}\label{eq:ode_formal_adj2}
	\begin{align}
		u - u'' &= \lambda(u - u''),\\
		\lambda u'(1) &= 0,\\
		u(0) = u'(1) - u(1) &= 0.
	\end{align}
\end{subequations}
Just as for $\hat{K}$,~\cref{eq:ode_formal_adj2} holds for $\lambda = 1$ if $u\in H^2(0,1)$ and $0 = u(0) = u(1) = u'(1)$.  If $\lambda \neq 1$, then~\cref{eq:ode_formal_adj2} is only satisfied for $u(x) = 0$.  Once again, the eigenvalue corresponding to the coercivity constant cannot be identified.

\section{Appendix B\@: First-Order Formulation of Poisson Equation}

\subsection{Standard Scaling}\label{sec:standard}
We derive explicit representations for the eigenvalues and eigenfunctions of the operator in~\eqref{eq:K_standard}, which corresponds to the first-order system~\eqref{eq:ls-poisson}, which is equivalent to the Poisson equation with homogeneous Dirichlet conditions.

Since $K = I + C$, we can determine the eigenvalues of $K$ as $1 + \lambda$, where $\lambda$ is an eigenvalue of $C$.

If $\lambda$ is an eigenvalue of $C$, then there exists $(\mathbf{0}, 0) \neq (\vq, u) \in H(\text{div}) \times H_0^1(\Omega)$ such that for $\phi := \nabla\cdot \vq$,
\begin{subequations}
\begin{align}
-\nabla\left(-\Delta \right)^{-2}\phi -\nabla\left(-\Delta \right)^{-1}\left(I + \left(-\Delta \right)^{-1} \right)u  &= \lambda \vq, \label{eq:standard_Ca}\\
\left(-\Delta \right)^{-1}\left(I + \left(-\Delta \right)^{-1} \right)\phi + \left(-\Delta \right)^{-1}\left(2I + \left(-\Delta \right)^{-1} \right)u &= \lambda u. \label{eq:standard_Cb}
\end{align}
\end{subequations}

Applying the divergence operator to~\cref{eq:standard_Ca} and the negative Laplacian to~\cref{eq:standard_Cb}, we obtain
\begin{subequations}
\begin{align}
\left(-\Delta \right)^{-1}\phi +\left(I + \left(-\Delta \right)^{-1} \right)u  &= \lambda \phi,\label{eq:standard_Cc}\\
\left(I + \left(-\Delta \right)^{-1} \right)\phi +\left(2I + \left(-\Delta \right)^{-1} \right)u &= \lambda(-\Delta) u. \label{eq:standard_Cd}
\end{align}
\end{subequations}
Note that $(-\Delta) u \in L^2(\Omega)$ by~\cref{eq:range_C}.

Combining~\cref{eq:standard_Cc} and~\cref{eq:standard_Cd}, we obtain
\begin{align}\label{eq:simplified_C}
\begin{split}
	(\lambda + 1)\phi - \left(I + \left(-\Delta \right)^{-1} \right)u &= \lambda(-\Delta) u - \left(2I + \left(-\Delta \right)^{-1} \right)u\\
	\implies\quad (\lambda + 1)\phi &= \lambda(-\Delta) u - u.
	\end{split}
\end{align}

If $\lambda = -1$, then~\cref{eq:simplified_C} implies that $-\Delta u = -u$.  Since $-\Delta$ is positive-definite on $H_0^1(\Omega)$, it follows that $u = 0$.  Thus, from~\cref{eq:standard_Cc}, we have $\left(-\Delta \right)^{-1}\phi = -\phi$.  Since $\left(-\Delta \right)^{-1}$ is positive-definite, we also find that $\phi = \nabla\cdot\vq = 0$.  Finally, from~\cref{eq:standard_Ca}, it follows that $\vq = \mathbf{0}$.  This shows that $\lambda\neq -1$, which agrees with our assertions in the beginning of~\cref{sec:apps}.

If $\lambda = 0$, then~\cref{eq:simplified_C} becomes
\begin{equation}\label{eq:u_divq}
	-u = \phi = \nabla\cdot\vq.
\end{equation}

Plugging~\cref{eq:u_divq} into~\cref{eq:standard_Cc}, it follows that
\begin{equation}
	u = \phi =  \nabla\cdot\vq = 0,
\end{equation}
so that $\vq \in \mathcal{N}(\nabla \cdot)$, the space of divergence-free functions.  With $\vq \in \mathcal{N}(\nabla \cdot)$ and $u = 0$, it is clear that~\cref{eq:standard_Ca} and~\cref{eq:standard_Cb} hold for $\lambda = 0$.  Thus, $\lambda = 0$ is an eigenvalue of $C$ with infinite-dimensional eigenspace $\mathcal{N}(\nabla \cdot) \times \left\{ 0\right\}$.

Now, if $\lambda\neq 0, -1$, we can solve for $\phi$ using~\cref{eq:simplified_C} as
\begin{equation}\label{eq:standard_phi}
	\phi = \frac{-1}{\lambda + 1}u + \frac{\lambda}{\lambda + 1}(-\Delta) u.
\end{equation}

Substituting~\cref{eq:standard_phi} in~\cref{eq:standard_Cc} and simplifying, we obtain
\begin{equation}\label{eq:standard_no_phi}
	\frac{\lambda^2}{\lambda + 1}(-\Delta) u = \frac{3\lambda + 1}{\lambda + 1}u + \frac{\lambda}{\lambda + 1}(-\Delta)^{-1} u.
\end{equation}

Applying $(-\Delta)^{-1}$ to~\cref{eq:standard_no_phi}, we arrive at the equality
\begin{equation}\label{eq:standard_u_eigen}
	\frac{\lambda^2}{\lambda + 1} u = \frac{3\lambda + 1}{\lambda + 1}(-\Delta)^{-1}u + \frac{\lambda}{\lambda + 1}(-\Delta)^{-2} u.
\end{equation}

Since $(-\Delta)^{-1}$ is compact on $H_0^1(\Omega)$, it follows from the Spectral Mapping Theorem~\cites{kreyszig1978introductory, conway2019course} that $u$ is an eigenfunction of $(-\Delta)^{-1}$.  Thus, if $\beta_{mn} = (m^2 + n^2)\pi^2$ ($m,n\in \mathbb{N}$), are the eigenvalues of the Laplacian on $\Omega = (0,1)^2$ with eigenfunctions $u_{mn} \in \text{span}\left\{\sin(\pi x)\sin(\pi y) \right\}$, we have
\begin{equation}\label{eq:laplace_eigen}
	(-\Delta)^{-1}u = (-\Delta)^{-1}u_{mn} = \frac{1}{\beta_{mn}}u_{mn}.
\end{equation}

Using this result in~\cref{eq:standard_u_eigen}, we obtain the relation
\begin{equation}
	 \frac{3\lambda + 1}{\beta_{mn}(\lambda + 1)} +  \frac{\lambda}{\beta_{mn}^2(\lambda + 1)} = \frac{\lambda^2}{\lambda + 1},
\end{equation}

which simplifies to the quadratic equation
\begin{equation}\label{eq:standard_quadratic}
	\beta_{mn}^2\lambda^2 - (1 + 3\beta_{mn})\lambda - \beta_{mn} = 0.
\end{equation}

It follows that the non-zero eigenvalues of $C$ are given by
\begin{equation}
	\lambda_{mn}^{\pm} = \frac{1 + 3\beta_{mn} \pm (1 + \beta_{mn})\sqrt{1 + 4\beta_{mn}}}{2\beta_{mn}^2}.
\end{equation}

Returning to~\cref{eq:standard_Ca}, by substituting the equalities~\cref{eq:laplace_eigen} and~\cref{eq:standard_phi}, we find that
\begin{equation}
	\vq = \vq^{\pm}_{mn} = - \frac{2\lambda_{mn}^{\pm}\beta_{mn} + \beta_{mn} + \lambda_{mn}^{\pm}}{\lambda_{mn}^{\pm}\beta_{mn}^2(\lambda_{mn}^{\pm} + 1)}\nabla u_{mn}.
\end{equation}
Simplifying with the help of~\cref{eq:standard_quadratic},
\begin{equation}
	\vq^{\pm}_{mn} = -\frac{2}{1 \pm \sqrt{1 + 4\beta_{mn}}}\nabla u_{mn}.
\end{equation}

We have characterized the spectrum of $C$, from which it follows that the eigenvalues of $K$ are
\begin{equation}\label{eq:standard_eigvals}
\mu_0 = 1,\quad \mu_{mn}^\pm = 1 + \lambda_{mn}^\pm,
\end{equation}
with corresponding eigenspaces
\begin{equation}
	\mathcal{N}(\nabla\cdot) \times \{ 0\},\quad \text{span}\left\{(\vq_{mn}^\pm, u_{mn})\right\}
\end{equation}

The eigenvalues of $K^{-1}$ are the reciprocals of~\eqref{eq:standard_eigvals}.    Some more algebra shows that
\begin{equation}\label{eq:standard_K_inv}
	\frac{1}{\mu_{mn}^\pm} = \frac{1}{1 + \lambda_{mn}^\pm} = \frac{1 + 2\beta_{mn} \mp \sqrt{1 + 4\beta_{mn}}}{2(1 + \beta_{mn})}.
\end{equation}
From~\cref{eq:standard_K_inv}, it is evident that the minimum eigenvalue of $K^{-1}$ is given by~\cref{eq:coercivity_standard}.

\subsection{Rescaled Equations}

We derive explicit representations for the eigenvalues and eigenfunctions of the operator in~\eqref{eq:D_C}, which corresponds to the first-order system~\eqref{eq:ls-poisson-scaled}.  The derivation is similar to~\cref{sec:standard}

Since $K$ is not of the form $I + C$, we will compute the eigenvalues of $K$ directly.

If $\lambda$ is an eigenvalue of $K$, then there exists $(\mathbf{0}, 0) \neq (\vq, u) \in H(\text{div}) \times H_0^1(\Omega)$ such that for $\phi := \nabla\cdot \vq$,
\begin{subequations}
\begin{align}
\vq -\nabla\left(-\Delta \right)^{-2}\phi -\frac{1}{2}\nabla\left(-\Delta \right)^{-1}\left(I + \left(-\Delta \right)^{-1} \right)u  &= \lambda \vq, \label{eq:rescaled_Ka}\\
\frac{1}{4}u + \frac{1}{2}\left(-\Delta \right)^{-1}\left(I + \left(-\Delta \right)^{-1} \right)\phi + \frac{1}{2}\left(-\Delta \right)^{-1}\left(I + \frac{1}{2}\left(-\Delta \right)^{-1} \right)u &= \lambda u. \label{eq:rescaled_Kb}
\end{align}
\end{subequations}

Applying the divergence operator to~\cref{eq:rescaled_Ka} and the negative Laplacian to~\cref{eq:rescaled_Kb}, we obtain
\begin{subequations}
\begin{align}
\left(-\Delta \right)^{-1}\phi + \frac{1}{2}\left(I + \left(-\Delta \right)^{-1} \right)u  &= (\lambda - 1) \phi,\label{eq:rescaled_Kc}\\
\frac{1}{2}\left(I + \left(-\Delta \right)^{-1} \right)\phi +\frac{1}{2}\left(I + \frac{1}{2}\left(-\Delta \right)^{-1} \right)u &= \left(\lambda - \frac{1}{4}\right)(-\Delta) u. \label{eq:rescaled_Kd}
\end{align}
\end{subequations}

Combining~\cref{eq:rescaled_Kc} and~\cref{eq:rescaled_Kd}, we obtain
\begin{align}\label{eq:simplified_K}
\begin{split}
	\lambda\phi - \frac{1}{2}\left(I + \left(-\Delta \right)^{-1} \right)u &= 2\left(\lambda - \frac{1}{4}\right)(-\Delta) u - \left(I + \frac{1}{2}\left(-\Delta \right)^{-1} \right)u\\
	\implies\quad \lambda\phi &= 2\left(\lambda - \frac{1}{4}\right)(-\Delta) u - \frac{1}{2}u.
	\end{split}
\end{align}

If $\lambda = 0$, then~\cref{eq:simplified_K} implies that $-\Delta u = -u$, and the positive-definiteness of the Laplacian it follows that $u = 0$.  This result combined with~\cref{eq:rescaled_Kc} shows that $\left(-\Delta \right)^{-1}\phi = -\phi$; i.e. $\phi = \nabla\cdot\vq = 0$.  Finally,~\cref{eq:rescaled_Ka} shows that $\vq = \mathbf{0}$.  Thus, $\lambda\neq 0$.

If $\lambda = \frac{1}{4}$, then~\cref{eq:simplified_K} becomes $\phi = -2u$.  Substituting this expression into~\cref{eq:rescaled_Kc}, we obtain $\left(-\Delta \right)^{-1}u = -\frac{2}{3}u$.  By again appealing to the fact that the Laplacian is positive-definite, we obtain $u = \phi = \nabla\cdot \vq = 0$.  By~\cref{eq:rescaled_Ka}, it follows that once again, $\vq = \mathbf{0}$.  So $\lambda \neq \frac{1}{4}$.

If $\lambda = 1$, then~\cref{eq:simplified_K} becomes $\phi = -\frac{1}{2}u + \frac{3}{2}(-\Delta) u$.  Similarly, to the cases of $\lambda = 0$ and $\lambda = \frac{1}{4}$, combining this with~\cref{eq:rescaled_Kc} shows that $u = \phi = \nabla\cdot\vq = 0$.  However,~\cref{eq:rescaled_Ka} now reduces to $\vq	 = \vq$.  Thus, $\lambda = 1$ is an eigenvalue, with infinite-dimensional eigenspace $\mathcal{N}(\nabla\cdot) \times \{ 0\}$.

Proceeding with the assumption that $\lambda \neq 0, \frac{1}{4}, 1$, solving for $\phi$ in~\cref{eq:simplified_K} shows that
\begin{equation}\label{eq:rescaled_phi}
	\phi = -\frac{1}{2\lambda}u + \left(2 - \frac{1}{2\lambda}\right)(-\Delta)^{-1}u.
\end{equation}

Substituting~\cref{eq:rescaled_phi} into~\cref{eq:rescaled_Kc} and simplifying, we obtain
\begin{equation}\label{eq:rescaled_no_phi}
	\frac{(\lambda - 1)(4\lambda - 1)}{2\lambda}(-\Delta) u = \frac{3\lambda - 1}{\lambda}u + \frac{\lambda - 1}{2\lambda}(-\Delta)^{-1} u.
\end{equation}

Applying $(-\Delta)^{-1}$ to~\cref{eq:rescaled_no_phi}, we arrive at
\begin{equation}\label{eq:rescaled_u_eigen}
	\frac{(\lambda - 1)(4\lambda - 1)}{2\lambda} u = \frac{3\lambda - 1}{\lambda}(-\Delta)^{-1}u + \frac{\lambda - 1}{2\lambda}(-\Delta)^{-2} u.
\end{equation}

Just as in~\cref{sec:standard}, the compactness of $(-\Delta)^{-1}$ and the Spectral Mapping Theorem show that $u$ is an eigenfunction of $(-\Delta)^{-1}$; i.e.~\cref{eq:laplace_eigen} holds for $\beta_{mn} = (m^2 + n^2)\pi^2$ and $u = u_{mn} \in \text{span}\left\{\sin(\pi x)\sin(\pi y)\right\}$.

Using this in~\cref{eq:rescaled_u_eigen} and simplifying, we obtain the quadratic equation
\begin{equation}\label{eq:rescaled_quadratic}
	4\beta_{mn}^2\lambda^2 - (1 + \beta_{mn})(1 + 5\beta_{mn})\lambda + (1 +\beta_{mn})^2 = 0.
\end{equation}

It follows that the non-unit eigenvalues of $K$ are given by
\begin{equation}\label{eq:rescaled_eigvals}
	\lambda_{mn}^{\pm} = \frac{(1 + \beta_{mn})(1 + 5\beta_{mn}) \pm (1 + \beta_{mn})\sqrt{(1 + \beta_{mn})(1 + 9\beta_{mn})}}{8\beta_{mn}^2}.
\end{equation}

Returning to~\cref{eq:rescaled_Ka}, by substituting the equalities~\cref{eq:laplace_eigen} and~\cref{eq:rescaled_phi}, we find that
\begin{equation}
	\vq = \vq^{\pm}_{mn} = - \frac{5\lambda_{mn}^{\pm}\beta_{mn} - \beta_{mn} + \lambda_{mn}^{\pm} - 1}{2\lambda_{mn}^{\pm}\beta_{mn}^2(\lambda_{mn}^{\pm} - 1)}\nabla u_{mn}.
\end{equation}

Simplifying with the help of~\cref{eq:rescaled_quadratic},
\begin{equation}
	\vq^{\pm}_{mn} = \frac{2\lambda_{mn}^\pm}{(1 + \beta_{mn})(1 - \lambda_{mn}^\pm)}\nabla u_{mn}.
\end{equation}

The eigenvalues of $K^{-1}$ are the reciprocals of~\eqref{eq:rescaled_eigvals}.    Some more algebra shows that
\begin{equation}\label{eq:rescaled_K_inv}
	\frac{1}{\mu_{mn}^\pm} =
  \frac{1}{\lambda_{mn}^\pm} =
  \frac{1 + 5\beta_{mn} \mp \sqrt{(1 +\beta_{mn})(1 + 9\beta_{mn})}}{2(1 + \beta_{mn})}.
\end{equation}

From~\cref{eq:rescaled_K_inv}, it is evident that the minimum eigenvalue of $K^{-1}$ is given by~\cref{eq:coercivity_rescaled}.

\bibliographystyle{amsplain}
\bibliography{refs-coercivity}

\end{document}